\newcommand{\lamkep}{\lambda_{k,\varepsilon}}
\newcommand{\lamk}{\lambda_{k}}
\newcommand{\sig}{\sigma}
\newcommand{\Gep}{\sqrt{G_{\e}} d\xi d\tau}
\newcommand{\G}{\sqrt{G_{0}} d\xi}
\newcommand{\Gt}{\sqrt{G_{0}} d\xi d\tau}
\newcommand{\intg}{\int_{\Gamma}}
\newcommand{\pa}{\partial}
\newcommand{\efunc}{\widetilde{\Phi}_{k,\e}}
\newcommand{\intp}{\int^{1}_{0}\int_{\Gamma}}
\newcommand{\intm}{\int^{0}_{-1}\int_{\Gamma}}
\newcommand{\inta}{\int^{1}_{-1}\int_{\Gamma}}
\providecommand{\norm}[1]{\l#1\|}
\newcommand{\e}{\varepsilon}
\newcommand{\Rn}{\mathbb{R}^n}
\newcommand{\ct}[1]{\langle {#1}\rangle \lower.3ex\hbox{$_{t}$}}
\newcommand{\lt}[1]{[ {#1}] \lower.3ex\hbox{$_{t}$}}
\newcommand{\N}{{\mathbb N}}
\newtheorem{thm}{Theorem}[section]
\newtheorem{lem}[thm]{Lemma}
\newtheorem{rmk}[thm]{Remark}
\title{\LARGE{\bf Two-phase eigenvalue problem on thin domains with Neumann boundary condition}\thanks{This research was partially supported by the Grant-in-Aid for Scientific Research (B) (\#26287020) 
Japan Society for the Promotion of Science.}}
\author{Toshiaki Yachimura\thanks{
Research Center for Pure and Applied Mathematics, Graduate
School of Information Sciences, Tohoku University, Sendai 980-8579, Japan.
{\em Electronic mail address:}
yachimura@ims.is.tohoku.ac.jp}}
\date{}
\begin{document}
\maketitle

\begin{abstract}
In this paper, we study an eigenvalue problem with piecewise constant coefficients on thin domains with Neumann boundary condition, and we analyze the asymptotic behavior of each eigenvalue as the domain degenerates into a certain hypersurface being the set of discontinuities of the coefficients. We show how the discontinuity of the coefficients and the geometric shape of the interface affect the asymptotic behavior of the eigenvalues by using a variational approach. 
\end{abstract}

\bigskip

\noindent{2010 {\it Mathematics Subject classification.} 35J20, 35P20}
\bigskip

\noindent {\it Keywords and phrases: eigenvalue problem, two phase, thin domain, transmission condition, singular perturbation, domain perturbation, asymptotic behavior, mean curvature}

\section{Introduction and main results}
In this paper, we study an eigenvalue problem with piecewise constant coefficients on thin domains with Neumann boundary condition, and we analyze the asymptotic behavior of each eigenvalue as the domain degenerates into a certain hypersurface being the set of discontinuities of the coefficients. Physically speaking, the problem dealt with in this paper is to consider the frequency of the composite material when two different materials are joined thinly. 
This problm is also related to the heat diffusion on thin heat conductors. 

We will formulate the two phase eigenvalue problem on thin domains. Let $D \subset \mathbb{R}^n$ $(n \geqslant 2)$ be a bounded domain whose boundary $\Gamma$ is of class $C^{2}$ and connected. For sufficiently small $\e > 0$, we put 
$$\Omega_{-}\left(\e\right) = \{ x \in D \,\, | \,\, d\left(x,\Gamma\right) < \e \}, \qquad \Omega_{+}\left(\e\right) = \{ x \in \mathbb{R}^n \setminus \overline{D} \,\, | \,\, d\left(x,\Gamma\right) < \e \}, $$where $d\left(x,\Gamma\right)$ denotes the Euclidean distance from $x$ to $\Gamma$. We define 
$$\Omega\left(\e\right) = \Omega_{-}\left(\e\right) \cup \Omega_{+}\left(\e\right) \cup \Gamma. $$  
We denote by $\nu$ the outward unit normal vector to the boundary $\pa \Omega(\e)$ and by $\nu_{\Gamma}$ the outward unit normal vector to the interface $\Gamma$. 

Now we will consider a two-phase eigenvalue problem on $\Omega(\e)$ as follows:
\begin{equation}\label{P}
\begin{cases}
-\mathrm{div} \left(\sigma \nabla \Phi \right) = \lambda \Phi \hspace{-0.1cm} &\text{in} \,\, \Omega(\e), \\
\displaystyle \frac{\partial \Phi}{\partial \nu} = 0 \, &\text{on} \, \partial \Omega(\e),
\end{cases}
\end{equation}
where $\sigma = \sigma(x) \left( x \in \Omega(\e) \right)$ is a piecewise constant function given by 
\begin{equation*}
\sigma(x) = \begin{cases}
\sig_{-}, \quad &x \in \Omega_{-}(\e) \cup \Gamma,\\
\sig_{+}, \quad &x \in \Omega_{+}(\e),
\end{cases}
\end{equation*}
and $\sig_{-}$, $\sig_{+}$ are distinct positive constants (i.e. $\sig_{-}, \sig_{+} > 0, \sig_{-} \neq \sig_{+}$). 
\begin{figure}[H]
\centering
\includegraphics[width=6cm]{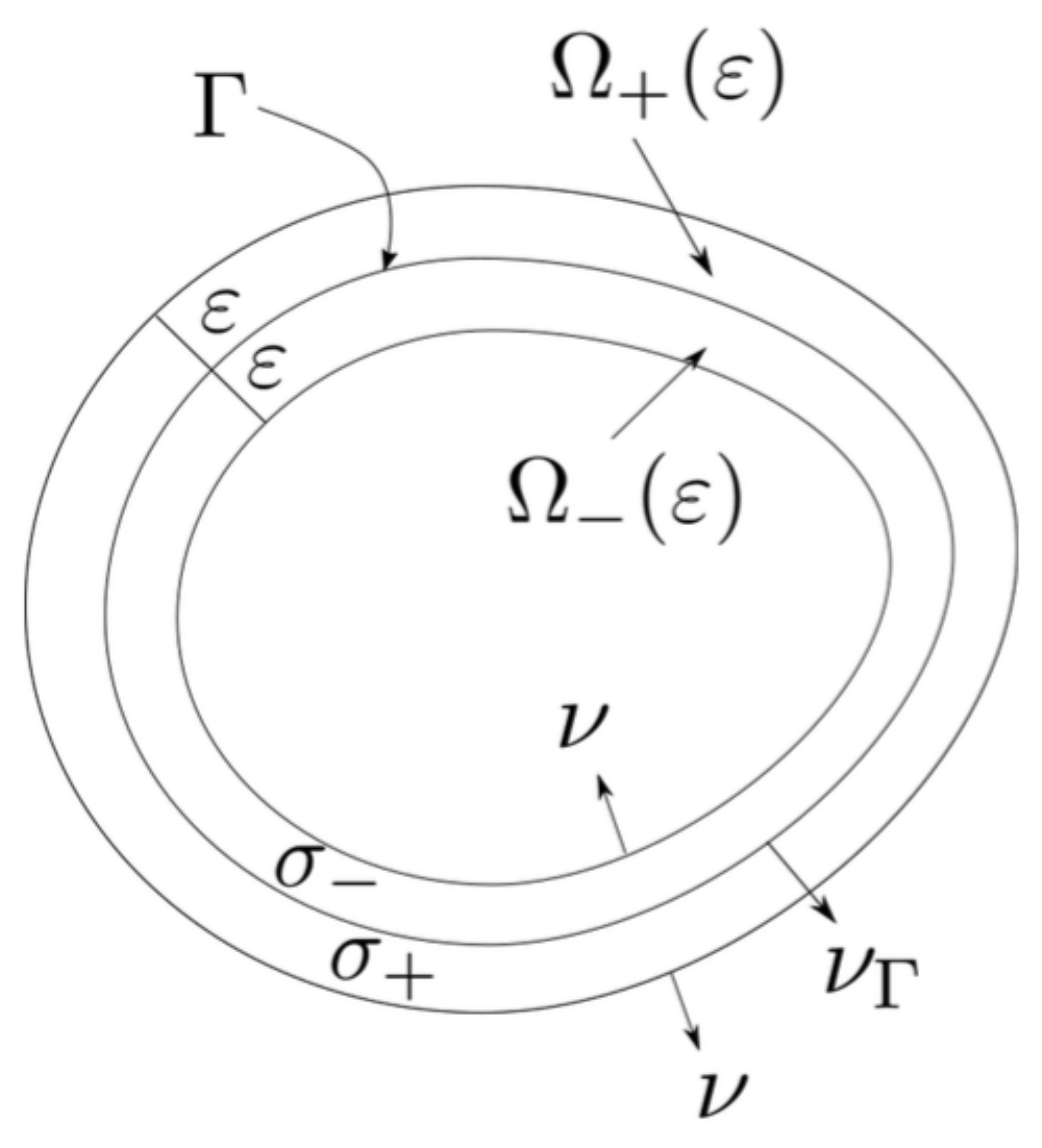}
\caption{Our problem setting}
\end{figure}
We consider the problem \eqref{P} in a weak sense, namely, $\lambda \in \mathbb{C}$ is an eigenvalue of \eqref{P} if there exists $\Phi \in H^{1}(\Omega(\e))$ such that $\Phi \not \equiv 0$ and 
\begin{equation}\label{weak}
\int_{\Omega(\e)} \sig \nabla \Phi \cdot \nabla \psi dx = \lambda \int_{\Omega(\e)} \Phi \psi dx \quad \text{for any } \psi \in H^{1}(\Omega(\e)).
\end{equation}
By a standard argument of self-adjoint operators, the eigenvalues of \eqref{P} are non-negative real numbers and the set of all eigenvalues is discrete. 
Let $\{ \lamkep \}_{k\geqslant1}$ be the eigenvalues satisfying $0 = \lambda_{1,\e} < \lambda_{2,\e} \leqslant \lambda_{3,\e} \leqslant \cdots \to +\infty$ and $\{ \Phi_{k,\e} \}_{k\geqslant1}$ be the associated eigenfunctions in \eqref{P}. 

Since $\sig = \sig(x)$ $(x \in \Omega(\e))$ is a piecewise constant function, we can rewrite \eqref{weak} as follows:
\begin{equation}\label{pb2}
\begin{cases}
- \sig_{\pm} \Delta \Phi_{\pm} = \lambda \Phi_{\pm} \quad &\text{in} \,\, \Omega_{\pm}(\e), \\
\Phi_{-} = \Phi_{+} \quad &\text{on} \,\, \Gamma, \\
\vspace{0.15cm}
\displaystyle \sig_{-} \frac{\partial \Phi_{-}}{\partial \nu_{\Gamma}} = \sig_{+} \frac{\partial \Phi_{+}}{\partial \nu_{\Gamma}} \quad &\text{on} \,\, \Gamma, \\
\displaystyle \frac{\partial \Phi_{\pm}}{\partial \nu} = 0 \quad &\text{on} \,\, \pa \Omega_{\pm}(\e) \setminus \Gamma.
\end{cases} 
\end{equation}
Here $\Phi_{-}$ and $\Phi_{+}$ are the restriction of $\Phi$ on $\Omega_{-}$ and $\Omega_{+}$, respectively. The third equality of \eqref{pb2} is usually called {\em transmission condition}, which can be interpreted as the continuity of the flux through the interface $\Gamma$.  
The purpose of this paper is to consider the asymptotic behavior of the eigenvalues $\{ \lamkep \}_{k\geqslant1}$ as $\e \to 0$ when the domain $\Omega(\e)$ degenerates to the interface $\Gamma$. In particular, our aims are to show how the discontinuity of the coefficients and the geometric shape of the interface $\Gamma$ affect the asymptotic behavior of eigenvalues as the domain $\Omega(\e)$ degenerates to the interface $\Gamma$. 

The study of domain perturbation and two-phase eigenvalue problems arises in some problems in the material science. For example, consider the problem of coating a material with a different one. Such a problem is called a {\em reinforcement problem}, and many authors have studied this kind of problem. We refer to \cite{Fr}\cite{GLNP}\cite{RW}.
Also in the case of the Laplacian (i.e. $\sig_{-} = \sig_{+}$), since pioneering work of Courant and Hilbert \cite{CH}, there have been many studies in various situations (e.g. dumbbell shaped domain\cite{A}\cite{J}\cite{JKo}, domain with small holes\cite{Oz}\cite{RT} and the references therein). 

Especially on thin domains, the geometric shape of the hypersurface where they degenerate affects the asymptotic behavior of eigenvalues of the Laplacian. Krej\v{c}i\v{r}\'{i}k--Raymond--Tu{\v{s}}ek\cite{KRT} proved that under Dirichlet boundary condition, it is influenced by eigenvalues of a Schr\"odinger operator with a potential depending on principle curvatures of the hypersurface. Krej\v{c}i\v{r}\'{i}k\cite{K} and Jimbo--Kurata\cite{JK} showed that under Dirichlet--Neumann mixed boundary condition, it is influenced by maximum values of the mean curvature of the hypersurface.

The results of Schatzman\cite{S} are closely related to our research although the geometric situation is different from this paper. Schatzman considered the asymptotic behavior of the eigenvalues of the Laplacian on thin domains with Neumann boundary condition as it degenerates to its boundary and proved that it is influenced by a geometric quantities such as the coefficients of the second fundamental form and the mean curvature of the boundary. Under our situation, Schatzman's results give us the following theorem.
\begin{thm}[Schatzman]\label{thm0}
Let $\lamkep$ be the $k$-th eigenvalue of the Laplacian in $\Omega(\e)$ with Neumann boundary condition, and let $\lamk$ be the $k$-th eigenvalue of the Laplace--Beltrami operator on $\Gamma$. Then we have 
\begin{equation*}
\lamkep = \lamk + O(\e) \,\,\, \text{as} \,\,\, \e \to 0.
\end{equation*}
Moreover if $\lamk$ is simple, then we have
\begin{equation*}
\lamkep = \lamk + O(\e^{2}) \,\,\, \text{as} \,\,\, \e \to 0.
\end{equation*}
\end{thm}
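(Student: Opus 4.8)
The plan is to combine the Courant--Fischer min--max characterisation of the eigenvalues with a passage to Fermi (tubular) coordinates around $\Gamma$, followed by the rescaling that flattens the thin direction to a fixed interval. For $\e$ small, every $x\in\Omega(\e)$ is written uniquely as $x=\xi+\e\tau\,\nu_\Gamma(\xi)$ with $\xi\in\Gamma$ and $\tau\in(-1,1)$, the sign of $\tau$ distinguishing $\Omega_+(\e)$ from $\Omega_-(\e)$. In these coordinates the Euclidean metric pulls back to a metric $G_\e$ on $\Gamma\times(-1,1)$ whose expansion reads $\sqrt{G_\e}=\sqrt{G_0}\,\big(1-\e\tau\,H(\xi)+O(\e^2)\big)$, where $\sqrt{G_0}\,d\xi$ is the surface measure of $\Gamma$ and $H$ is (proportional to) the mean curvature; the tangential part of the energy density equals $|\nabla_\Gamma\Phi|^2$ up to an $O(\e\tau)$ term carrying the second fundamental form, and the normal part is exactly $\e^{-2}|\pa_\tau\Phi|^2$. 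Hence, writing $\efunc$ for the rescaled eigenfunction, the Rayleigh quotient of \eqref{P} becomes, up to $O(\e)$ relative error, that of $-\Delta_\Gamma-\e^{-2}\pa_\tau^2$ on $\Gamma\times(-1,1)$ with Neumann conditions at $\tau=\pm1$, whose spectrum converges to that of $-\Delta_\Gamma$ because the transverse Neumann Laplacian on $(-1,1)$ has a one--dimensional kernel spanned by constants and a spectral gap of order $\e^{-2}$.

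First I would establish $\lamkep=\lamk+O(\e)$ by a two--sided argument. For the upper bound I feed into Courant--Fischer the $k$--dimensional trial space of transverse--constant extensions of $\Phi_1,\dots,\Phi_k$, the first $k$ Laplace--Beltrami eigenfunctions on $\Gamma$: the $\pa_\tau$ term drops, and the expansion of $\sqrt{G_\e}$ makes numerator and denominator agree with their $\Gamma$--values up to $O(\e)$, giving $\lamkep\le\lamk+O(\e)$. For the lower bound I decompose a genuine (rescaled) eigenfunction as $\efunc=\overline{\Phi}+\Phi^\perp$, where $\overline{\Phi}(\xi)=\tfrac12\int_{-1}^{1}\efunc(\xi,\tau)\,d\tau$ is the transverse average and $\Phi^\perp$ has zero transverse average; the Poincaré inequality on $(-1,1)$ together with the factor $\e^{-2}$ in front of $\int|\pa_\tau\efunc|^2$ forces $\|\Phi^\perp\|_{L^2}+\|\nabla_\Gamma\Phi^\perp\|_{L^2}=O(\e)$ on any bounded band of eigenvalues. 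Inserting this back shows the averages $\overline{\Phi}_1,\dots,\overline{\Phi}_k$ are almost $L^2(\Gamma)$--orthonormal with Rayleigh quotients on $\Gamma$ at most $\lamkep+O(\e)$, so min--max on $\Gamma$ gives $\lamk\le\lamkep+O(\e)$.

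To upgrade to $O(\e^2)$ when $\lamk$ is simple, I would run a one--term expansion $\efunc=\Phi_k+\e\,\phi_1+\cdots$, $\lamkep=\lamk+\e\,\mu_1+\cdots$, insert it into the transformed equation and collect powers of $\e$. The $\e^0$ equation holds by the Laplace--Beltrami eigenpair; the $\e^1$ equation is a Fredholm problem whose solvability condition expresses $\mu_1$ as the integral over $\Gamma\times(-1,1)$ of the order--$\e$ coefficients of the metric against $\Phi_k^2$. These coefficients --- the factor $-\e\tau H$ in $\sqrt{G_\e}$ and the matching first--order term in the inverse metric --- are \emph{odd} in $\tau$, while $\Phi_k$ is $\tau$--independent and $\Gamma\times(-1,1)$ is symmetric under $\tau\mapsto-\tau$; since $\int_{-1}^{1}\tau\,d\tau=0$ one gets $\mu_1=0$, so the expansion starts at order $\e^2$. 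Simplicity of $\lamk$ is precisely what legitimises this first--order perturbation computation (isolated, one--dimensional eigenspace) and converts the formal construction into a rigorous $O(\e^2)$ quasimode; the uniform spectral gap around $\lamk$ established in the first part then upgrades the quasimode bound to the claimed eigenvalue estimate.

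The step I expect to be the main obstacle is the lower--bound bookkeeping in the presence of the $\e$--dependent weight $\sqrt{G_\e}$: one must show not merely that $\Phi^\perp$ is small but that the whole family $\{\overline{\Phi}_{j}\}_{j\le k}$ remains uniformly linearly independent and admissible as $\e\to0$, which needs uniform-in-$\e$ a priori bounds $\|\efunc\|_{H^1(\Gamma\times(-1,1))}\le C$ from elliptic regularity on the fixed model domain together with a careful comparison of the $\e$--weighted $L^2$ inner product with the fixed one. The $O(\e^2)$ refinement is then comparatively clean, hinging on the parity cancellation above, which however relies on setting up the Fermi expansion with the symmetric placement of $\Omega_\pm(\e)$ about $\Gamma$.
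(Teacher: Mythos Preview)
This theorem is not proved in the paper; it is quoted from Schatzman \cite{S} as background, and no argument is supplied. The paper's own contributions are Theorems~\ref{thm1}--\ref{thm3} for the two-phase case $\sigma_-\neq\sigma_+$. Specialising Theorem~\ref{thm1} to $\sigma_-=\sigma_+=1$ would recover the first assertion here, while Theorem~\ref{thm2} yields only $o(\e)$ (not $O(\e^{2})$) in that limit, so the paper's methods do not themselves reproduce the full Schatzman statement.

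That said, your outline matches closely what the paper does for its own theorems. The Fermi-coordinate change, the rescaling $t=\e\tau$, and the upper bound via transverse-constant trial functions are exactly the contents of Sections~\ref{pre}--\ref{prth1}. For the lower bound the paper takes a slightly different route: instead of the average/oscillation splitting $\efunc=\overline{\Phi}+\Phi^{\perp}$ with a Poincar\'e inequality in $\tau$, it expands $\efunc$ in the product basis $\{\Phi_{p}(\xi)\phi_{l}(\tau)\}$ (with $\phi_{l}$ the Neumann eigenfunctions on $(-1,1)$) and tests the weak equation against the projection onto the $\lamk$-eigenspace. Your smallness of $\Phi^{\perp}$ is precisely the paper's estimate $\sum_{p\ge 1,\,l\ge 2}(\alpha^{p,l}_{k})^{2}=O(\e^{2})$; the two arguments are equivalent, yours being more transparent while the paper's Fourier bookkeeping is better adapted to tracking the $\sigma$-discontinuity in the two-phase problem.

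Your parity argument for the $O(\e^{2})$ refinement is the correct mechanism in the one-phase setting and is essentially Schatzman's point: the order-$\e$ metric corrections are odd in $\tau$, so against a $\tau$-independent leading eigenfunction they integrate to zero over $(-1,1)$. This cancellation is exactly what fails when $\sigma_-\neq\sigma_+$, since $\sigma$ itself breaks the $\tau\mapsto-\tau$ symmetry; that is why Theorem~\ref{thm2} carries a genuine order-$\e$ term $\Lambda_{k}$ and stops at $o(\e)$.
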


On the other hand, in the two-phase eigenvalue problem on $\Omega(\e)$, the method used in the previous study can not be applied since the coefficients are discontinuous at the interface $\Gamma$. We treat this problem by using a variational method and the Fourier expansions with respect to an appropriate orthonormal basis. This idea is based on the method used in \cite{JK}. It works well even if the coefficients are discontinuous.

Now we present the main results of this paper about the asymptotic behavior of $\lamkep$ as $\e \to 0$.

\begin{thm}\label{thm1}
Let $\lamkep$ be the $k$-th eigenvalue of the two-phase eigenvalue problem \eqref{P}, and let $\lamk$ be the $k$-th eigenvalue of the Laplace--Beltrami operator on $\Gamma$. Then we have
\begin{equation*}
\lamkep = \frac{\sig_{-}+\sig_{+}}{2} \lamk + O(\e) \,\,\, \text{as} \,\,\, \e \to 0.
\end{equation*}
\end{thm}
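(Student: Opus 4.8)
The plan is to set up a min–max (Courant–Fischer) characterization of $\lamkep$ on a fixed reference domain obtained by rescaling the normal variable, and then to identify the leading term of the Rayleigh quotient as $\e \to 0$. First I would introduce Fermi (normal) coordinates near $\Gamma$: since $\Gamma$ is compact and $C^2$, for small $\e$ the map $(\xi,t) \mapsto \xi + \e t\,\nu_\Gamma(\xi)$, $\xi \in \Gamma$, $t \in (-1,1)$, is a diffeomorphism onto $\Omega(\e)$, with $\Omega_-(\e)$ corresponding to $t \in (-1,0)$ and $\Omega_+(\e)$ to $t \in (0,1)$. Pulling back the metric, the volume element becomes $\sqrt{G_\e}\,d\xi\,dt = \e\,(1 + O(\e))\sqrt{G_0}\,d\xi\,dt$ uniformly, where $\sqrt{G_0}\,d\xi$ is the surface measure on $\Gamma$, and the gradient splits as a tangential part (of order $1$, converging to the Laplace–Beltrami gradient on $\Gamma$) plus a normal part $\tfrac{1}{\e}\pa_t$. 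After dividing numerator and denominator of the Rayleigh quotient by $\e$, the quadratic form on a function $\Phi$ becomes, to leading order,
\begin{equation*}
\int_{-1}^{1}\!\!\int_{\Gamma} \sig(t)\Bigl( |\nabla_\Gamma \Phi|^2 + \tfrac{1}{\e^2}|\pa_t \Phi|^2 \Bigr)\sqrt{G_0}\,d\xi\,dt \Big/ \int_{-1}^{1}\!\!\int_{\Gamma} |\Phi|^2 \sqrt{G_0}\,d\xi\,dt,
\end{equation*}
with $\sig(t) = \sig_-$ for $t<0$ and $\sig(t) = \sig_+$ for $t>0$.

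The core of the argument is then a two-sided bound. For the \emph{upper bound}, I would take the span of the first $k$ Laplace–Beltrami eigenfunctions $\varphi_1,\dots,\varphi_k$ on $\Gamma$, extended to $\Omega(\e)$ as constants in $t$; on such $t$-independent test functions the singular $\tfrac{1}{\e^2}|\pa_t\Phi|^2$ term vanishes, the denominator is $\int_\Gamma |\Phi|^2\,\sqrt{G_0}\,d\xi \cdot 2$ (plus $O(\e)$ corrections from $\sqrt{G_\e}$), and the numerator is $(\sig_- + \sig_+)\int_\Gamma |\nabla_\Gamma \Phi|^2\,\sqrt{G_0}\,d\xi$ (again up to $O(\e)$). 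Dividing, the min–max over this $k$-dimensional subspace gives $\lamkep \le \tfrac{\sig_-+\sig_+}{2}\lamk + O(\e)$. For the \emph{lower bound} I would take a trial $k$-dimensional subspace spanned by actual eigenfunctions $\Phi_{1,\e},\dots,\Phi_{k,\e}$ and, for each, write its Fourier expansion in an orthonormal basis of $L^2(-1,1)$ adapted to the problem — concretely, decompose $\Phi = \bar\Phi(\xi) + \Phi^\perp(\xi,t)$ where $\bar\Phi$ is the $t$-average. The orthogonal complement $\Phi^\perp$ costs at least $c/\e^2 \cdot \|\Phi^\perp\|^2$ in the numerator (a Poincaré inequality in $t$ on each half-interval, using that $\pa_t\Phi^\perp$ cannot vanish identically unless $\Phi^\perp$ is $t$-constant, hence zero after subtracting the mean — here the transmission condition and the Neumann conditions at $t = \pm 1$ enter), so if $\lamkep$ stays bounded (which the upper bound guarantees), then $\|\Phi^\perp\|^2 = O(\e^2)$, and $\Phi$ is $O(\e)$-close in $H^1$ to its $t$-independent part, reducing the Rayleigh quotient to the Laplace–Beltrami quotient on $\Gamma$ with the weight $\tfrac{\sig_-+\sig_+}{2}$, up to $O(\e)$.

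I expect the \textbf{main obstacle} to be making the lower bound rigorous with the stated $O(\e)$ rate — in particular, controlling the cross terms between $\bar\Phi$ and $\Phi^\perp$ in the quadratic form (the tangential derivative $\nabla_\Gamma$ does not commute cleanly with the $t$-averaging once the metric $G_\e$ genuinely depends on $t$ through the second fundamental form, producing terms of size $\e \|\nabla_\Gamma \bar\Phi\|\,\|\pa_t \Phi^\perp\|$ that must be absorbed), and in verifying that the approximate eigenfunctions obtained this way are close enough to a genuine $k$-dimensional space to conclude the eigenvalue comparison via the min–max principle rather than merely for a single eigenvalue. The bookkeeping of the $O(\e)$ remainder coming from $\sqrt{G_\e} = \e\sqrt{G_0}(1 + \e\,t\,H + O(\e^2))$, where $H$ is (proportional to) the mean curvature of $\Gamma$, is routine but must be done uniformly in $\xi$; since $\Gamma \in C^2$ and compact this is fine. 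A secondary point is the degenerate first eigenvalue $\lamk = 0$ ($k=1$): there the statement is trivial because $\lambda_{1,\e} = 0$ for all $\e$, so one may assume $k \ge 2$ and $\lamk > 0$ throughout.
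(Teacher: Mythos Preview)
Your upper bound is correct and identical to the paper's: extend the first $k$ Laplace--Beltrami eigenfunctions on $\Gamma$ as constants in the normal variable and feed them into the min--max.

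The gap is in the lower bound. You split $\efunc = \bar\Phi + \Phi^\perp$ and assert that the cross term in the quadratic form is of size $\e\,\|\nabla_\Gamma\bar\Phi\|\,\|\pa_\tau\Phi^\perp\|$, arising only from the $\tau$-dependence of the metric. That is indeed how it works in the one-phase case, but here the leading-order tangential cross term is
\[
\int_{-1}^{1}\!\!\int_\Gamma \sig(\tau)\,\nabla_{g_0}\bar\Phi\cdot\nabla_{g_0}\Phi^\perp\,\sqrt{G_0}\,d\xi\,d\tau
= (\sig_{+}-\sig_{-})\int_\Gamma \nabla_{g_0}\bar\Phi\cdot\nabla_{g_0}\!\Bigl(\int_0^1 \Phi^\perp\,d\tau\Bigr)\sqrt{G_0}\,d\xi,
\]
which does \emph{not} vanish precisely because $\sig_{-}\neq\sig_{+}$. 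The inner $\tau$-integral has $L^2(\Gamma)$-norm $O(\e)$, but its tangential gradient is a priori only $O(1)$, so Cauchy--Schwarz on the gradients yields merely an $O(1)$ bound; and you cannot integrate by parts to throw the derivatives onto $\bar\Phi$, since $\bar\Phi$ is only in $H^1(\Gamma)$ and $\Delta_\Gamma\bar\Phi$ is not available. Your diagnosis of the obstacle as a metric-commutation issue misses this genuinely two-phase phenomenon.

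The paper sidesteps the difficulty by not decomposing the Rayleigh quotient of $\efunc$ at all. It expands $\efunc$ in the basis $\{\Phi_p(\xi)\phi_l(\tau)\}$ (with $\phi_l$ the Neumann eigenfunctions on $(-1,1)$) and tests the weak eigenvalue equation against $\psi=\sum_{p:\lambda_p=\lamk}\alpha^{p,1}_{k}\Phi_p\phi_1$, the projection onto the $\lamk$-eigenspace of $\Delta_\Gamma$. Because $\psi$ lies in a single eigenspace, the analogous cross term carries the \emph{fixed} factor $\lamk$ rather than an unbounded $\lambda_p$, and Young's inequality together with $\sum_{p\geq 1,\,l\geq 2}(\alpha^{p,l}_{k})^2=O(\e^2)$ (your Poincar\'e bound) gives the required $O(\e)$. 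To divide through and conclude, one still needs $\sum_{p:\lambda_p=\lamk}(\alpha^{p,1}_{k})^2=1+o(1)$; the paper establishes this via a Rellich compactness argument, passing to the limit in the weak equation to identify the limit of $\efunc$ as a $\lamk$-eigenfunction of $\Delta_\Gamma$. That step is the real content behind the second obstacle you flag, and it does require nontrivial work.
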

Note that the remainder term $O(\e)$ depends on $\sig_{-}, \sig_{+}$, and $k$.   
From Theorem \ref{thm1}, we see that the influence of the discontinuity of the coefficients on the asymptotic behavior of the eigenvalues appears as the arithmetic mean of coefficients.
The outline of the proof of Theorem \ref{thm1} is as follows: first, we derive the upper bound of eigenvalues by substituting an appropriate function composed of the eigenfunctions of the Laplace--Beltrami operator on $\Gamma$ into the Rayleigh quotient. Next, we derive the lower bound of eigenvalues by taking a certain test function in \eqref{weak} that projects the eigenfunctions onto the eigenspace corresponding to the $k$-th eigenvalues of the Laplace--Beltrami operator on $\Gamma$. Such a test function is constructed by means of the Fourier expansions with respect to an appropriate orthonormal basis. 
By the upper bound of eigenvalues, we get some estimates for the Fourier coefficients of the eigenfunctions. By using the estimates, we can obtain the lower bound of eigenvalues.
Combining the upper bound and lower bound of eigenvalues, we prove Theorem \ref{thm1}. This proof is in Section \ref{prth1}.

If we suppose that $\lamk$ is simple, we obtain more precise asymptotic behavior of $\lamkep$.
\begin{thm}\label{thm2}
Suppose that $\lamk$ is simple. Then we have 
\begin{equation*}
\lamkep = \frac{\sig_{-}+\sig_{+}}{2} \lamk + \frac{\sig_{+} - \sig_{-}}{4} \Lambda_{k}\e + o(\e)\,\,\, \text{as} \,\,\, \e \to 0, 
\end{equation*}
where 
\begin{equation*}
\Lambda_{k} = \intg \sum^{n-1}_{i,j = 1} \left( \widetilde{G}^{ij} - H g^{ij}_{0} \right) \frac{\pa \Phi_{k}}{\pa \xi_{i}} \frac{\pa \Phi_{k}}{\pa \xi_{j}} \G.
\end{equation*}
\end{thm}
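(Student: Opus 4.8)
The plan is to refine, by one order in $\e$, the two-sided bounds that prove Theorem \ref{thm1}, using a formal asymptotic expansion as a guide. As in the proof of Theorem \ref{thm1} one introduces Fermi coordinates around $\Gamma$: a point of $\Omega(\e)$ is written $\xi+t\,\nu_\Gamma(\xi)$ with $\xi\in\Gamma$, $t\in(-\e,\e)$, the sign of $t$ distinguishing $\Omega_-(\e)$ from $\Omega_+(\e)$; rescaling $t=\e\tau$, $\tau\in(-1,1)$, turns \eqref{P} into a transmission problem on the fixed cylinder $\Gamma\times(-1,1)$ with Rayleigh quotient
\begin{equation*}
R_\e(\Phi)=\frac{\inta\sig_\pm\left(\,\sum^{n-1}_{i,j=1}G^{ij}_\e\,\pa_{\xi_i}\Phi\,\pa_{\xi_j}\Phi+\e^{-2}(\pa_\tau\Phi)^2\right)\Gep}{\inta\Phi^2\,\Gep}.
\end{equation*}
In these coordinates $\sqrt{G_\e}=\sqrt{G_0}\,(1-\e\tau H+O(\e^2))$ and $G^{ij}_\e=g^{ij}_0+\e\tau\,\widetilde G^{ij}+O(\e^2)$, so that $\sqrt{G_\e}\,G^{ij}_\e=\sqrt{G_0}\left(g^{ij}_0+\e\tau(\widetilde G^{ij}-Hg^{ij}_0)+O(\e^2)\right)$, which already isolates the combination occurring in $\Lambda_k$; all remainders below are uniform on $\Gamma\times(-1,1)$.

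\emph{Identifying the limit.} Substituting $\Phi=\Phi_0+\e\Phi_1+\e^2\Phi_2+\e^3\Phi_3+\cdots$, $\lambda=\lambda_0+\e\lambda_1+\cdots$ into the rescaled eigenvalue equation, together with the Neumann conditions at $\tau=\pm1$ and the transmission conditions $\Phi|_{0^-}=\Phi|_{0^+}$, $\sig_-\pa_\tau\Phi|_{0^-}=\sig_+\pa_\tau\Phi|_{0^+}$, and equating powers of $\e$: at orders $\e^0$ and $\e^1$ the functions $\Phi_0,\Phi_1$ are $\tau$-independent; the Fredholm solvability condition at order $\e^2$ gives $-\Delta_\Gamma\Phi_0=\tfrac{2\lambda_0}{\sig_-+\sig_+}\Phi_0$, i.e. (using simplicity) $\lambda_0=\tfrac{\sig_-+\sig_+}{2}\lamk$ and $\Phi_0=\Phi_k$; and the solvability condition at order $\e^3$, after integrating by parts on the closed manifold $\Gamma$, produces exactly $\lambda_1=\tfrac{\sig_+-\sig_-}{4}\Lambda_k$. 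This fixes the candidate expansion and the corrector $\efunc:=\Phi_k+\e\Phi_1+\e^2\Phi_2+\e^3\Phi_3$.

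\emph{Upper bound.} Viewing $\Phi_1,\dots,\Phi_k$ (normalised by $\intg\Phi_j^2\,\G=1$) as $\tau$-independent test functions and taking $V=\mathrm{span}\{\Phi_1,\dots,\Phi_k\}$ in the min--max principle, a direct computation using the metric expansion and the parity of $\tau$ gives $R_\e(\Phi_k)=\tfrac{\sig_-+\sig_+}{2}\lamk+\tfrac{\sig_+-\sig_-}{4}\Lambda_k\,\e+O(\e^2)$ — the factor $\tfrac14$ arising from $\int_0^1\tau\,d\tau=\tfrac12=-\int_{-1}^0\tau\,d\tau$ in the numerator and from $\int_{-1}^1\tau\,d\tau=0$ annihilating the $O(\e)$ term of the denominator. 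Since $\lamk$ is simple, $\lamk-\lambda_{k-1}>0$, so the limiting quotient $R_0|_V$ has a nondegenerate maximum at $\pm\Phi_k$; hence for small $\e$ the maximiser $u^*_\e$ of $R_\e$ over $V$ satisfies $u^*_\e=\pm\Phi_k+O(\e)$, and since $\Phi_k$ is a critical point of $R_0$ among $\tau$-independent functions, $R_\e(u^*_\e)=R_\e(\Phi_k)+O(\e^2)$. Therefore $\lamkep\leqslant\max_{u\in V}R_\e(u)=\tfrac{\sig_-+\sig_+}{2}\lamk+\tfrac{\sig_+-\sig_-}{4}\Lambda_k\,\e+O(\e^2)$.

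\emph{Lower bound and the main obstacle.} Let $\Phi_{k,\e}$ be the $k$-th eigenfunction, normalised in $L^2$ and orthogonal there to $\Phi_{1,\e},\dots,\Phi_{k-1,\e}$. Boundedness of $\lamkep$ forces $\|\pa_\tau\Phi_{k,\e}\|_{L^2}^2=O(\e^2)$ and the $\xi$-energy to be $O(1)$; expanding $\Phi_{k,\e}$ in an orthonormal basis of $L^2(-1,1)$ adapted to the two sub-intervals and to the weights $\sig_\pm$, the higher $\tau$-modes carry energy $o(\e^2)$, and — this is where the simplicity of $\lamk$, via the spectral gap, is used — the $\tau$-average of $\Phi_{k,\e}$ is $o(1)$-close (in fact $O(\e)$-close) to $\Phi_k$ in $H^1(\Gamma)$ after fixing the sign. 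Inserting into \eqref{weak} the test function obtained by projecting $\Phi_{k,\e}$ onto the $\tau$-independent extension of the $k$-th Laplace--Beltrami eigenspace, exactly as in the proof of Theorem \ref{thm1} but now retaining the $O(\e)$ terms, and controlling the error contributions by the a priori estimates on the Fourier coefficients and by orthogonality to $\Phi_{1,\e},\dots,\Phi_{k-1,\e}$, yields $\lamkep\geqslant R_\e(\Phi_k)-o(\e)$; equivalently, one checks $R_\e(\efunc)=R_\e(\Phi_k)+O(\e^2)$ and $\lamkep=R_\e(\Phi_{k,\e})=R_\e(\efunc)+o(\e)$, the point being that $\Phi_k$ is a critical point of the limiting problem so that the corrector affects the eigenvalue only at order $\e^2$. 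Combining with the upper bound gives Theorem \ref{thm2}. The hard part is precisely this lower bound: one must upgrade the qualitative convergence $\Phi_{k,\e}\to\Phi_k$ of Theorem \ref{thm1} to a quantitative $o(\e)$ estimate in the energy norm and bound the remainder $\Phi_{k,\e}-\efunc$ sharply enough that it contributes $o(\e)$ to $R_\e$; a subsidiary nuisance is the bookkeeping of the transmission condition at $\tau=0$, which is what turns the naive mean $\tfrac{\sig_-+\sig_+}{2}$ into the difference $\tfrac{\sig_+-\sig_-}{4}$ in the first-order term and forces the $\tau$-basis in the Fourier argument to be chosen compatibly with $\sig_\pm$.
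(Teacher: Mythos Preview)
Your core mechanism --- testing the weak equation \eqref{eigenequation} against the projection of the true eigenfunction onto $\mathrm{span}\{\Phi_k\phi_1\}$ and controlling the cross-terms via sharpened Fourier-coefficient estimates --- is exactly the paper's, and you have correctly identified the two key estimates: the higher $\tau$-modes carry mass $o(\e^2)$ (Lemma~\ref{key lemma2}) and, by simplicity, the off-eigenspace part has $H^1$-energy $o(1)$ (Lemma~\ref{key lemma3}). Several surrounding elements of your outline are, however, unnecessary or slightly off. The paper does not split into upper and lower bounds at this order: the single identity obtained from $\psi=\alpha^{k,1}_k\Phi_k\phi_1$ in \eqref{eigenequation} yields, after the error analysis, $\lamkep(\alpha^{k,1}_k)^2=\tfrac{\sig_-+\sig_+}{2}\lamk(\alpha^{k,1}_k)^2+\tfrac{\sig_+-\sig_-}{4}\Lambda_k(\alpha^{k,1}_k)^2\e+o(\e)$ directly, so your separate min--max upper bound, while correct, is redundant. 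The formal expansion and corrector $\Phi_k+\e\Phi_1+\cdots$ are never built; all work is done on the true eigenfunction via its Fourier coefficients $\alpha^{p,l}_k$. Your assertion that the $\tau$-basis must be adapted to $\sigma_\pm$ is a misconception: the paper uses the plain Neumann basis $\{\phi_l\}$ of \eqref{eigenvalue problem for thickness direction}, and the piecewise constant $\sigma$ enters only by writing each $\tau$-integral as $\sig_-\int_{-1}^0+\sig_+\int_0^1$; the factor $\sig_+-\sig_-$ then appears because the odd modes $\phi_{2m}$ survive the split integration. Finally, your claim of $O(\e)$-closeness in $H^1(\Gamma)$ is stronger than what is proved or needed; the paper obtains only $o(1)$ in Lemma~\ref{key lemma3}, which suffices since the relevant error term (the paper's $I_3$) already carries a factor $\e$. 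The technical heart --- upgrading $O(\e^2)$ to $o(\e^2)$ in Lemma~\ref{key lemma2} --- is achieved by subtracting the limit equation \eqref{convergence eq} from \eqref{eigenequation} with test function $\efunc-\widehat{\Phi}_k$ and using Theorem~\ref{thm1} together with the $L^2$-convergence $\efunc\to\widehat{\Phi}_k$, not via an adapted basis.
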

\begin{rmk}
Notice that $\widetilde{G}^{ij} - H g^{ij}_{0}$ are the components of the matrix $(2W - tr(W)I) g^{-1}_{0}$, where $W$ denotes the Weingarten matrix, $I$ is the identity matrix, and $g^{-1}_{0}$ is the inverse metric matrix of $\Gamma$. Note that the eigenvalues of the Weingarten matrix $W$ are the principal curvatures of $\Gamma$ and $H = tr(W)$. As the matrix $2W - tr(W) I$ is in general not positive definite neither negative definite, it follows that the term $\Lambda_{k}$ can be either positive or negative.  
\end{rmk}
Note that the remainder term $o(\e)$ depends on $\sig_{-}, \sig_{+}$, and $k$. 
The exact meaning of the symbols of Theorem \ref{thm2} is explained in Section \ref{pre}.
The term $\Lambda_{k}$ represents the geometric shape of the interface $\Gamma$, which consists of the quantities $\widetilde{G}^{ij}$ related to the second fundamental form, mean curvature $H$, and the $k$-th normalized eigenfunction $\Phi_{k}$ on the interface $\Gamma$. We mention that a term similar to $\Lambda_{k}$ appears in Schatzman's original results\cite[Section $11$, Theorem $4$]{S}. From Theorem \ref{thm2}, we see that the influence of the geometric shape of the interface $\Gamma$ appears in the second term of the asymptotic behavior of eigenvalues. Moreover, we notice that the second term only appears when $\sig_{-} \neq \sig_{+}$. 
The outline of the proof of Theorem \ref{thm2} is as follows: by Theorem \ref{thm1} and the simplicity of $\lamk$, we get a better estimate for the Fourier coefficients than the one used in the proof of Theorem \ref{thm1}. We prove Theorem \ref{thm2} by using it. This proof is in Section \ref{simplethm}.

If $\lamk$ is not simple, it is difficult to get more precise asymptotic behavior of the eigenvalues in general. If the interface $\Gamma$ is a sphere, however, we can obtain it although the eigenvalues of the sphere are not simple. 
\begin{thm}\label{thm3}
If $\Gamma$ is $S^{n-1}(r)$ $($i.e. the $n-1$ dimensional sphere with radius $r > 0$$)$, then we have
\begin{equation}\label{eigenvalue in the ball}
\lamkep = \frac{\sig_{-}+\sig_{+}}{2} \lamk + \frac{n-3}{4r}\left(\sig_{+}-\sig_{-}\right) \lamk \e + o(\e) \,\,\, \text{as} \,\,\, \e \to 0.
\end{equation}
\end{thm}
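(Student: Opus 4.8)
The plan is to rerun the argument behind Theorem~\ref{thm2}, feeding in the geometry of the sphere; the only genuine issue is that the eigenvalues $\lamk$ of $S^{n-1}(r)$ are in general not simple, so Theorem~\ref{thm2} is not directly applicable.

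First I would compute the geometric coefficients $\widetilde{G}^{ij} - H g^{ij}_{0}$ appearing in $\Lambda_{k}$ for $\Gamma = S^{n-1}(r)$. Since the sphere is umbilic, the Weingarten matrix is a scalar multiple of the identity, $W = \kappa\, I_{n-1}$ with $\kappa$ the constant principal curvature, so $2W - \mathrm{tr}(W) I = (3-n)\kappa\, I_{n-1}$; with the sign conventions of Section~\ref{pre} (for which $\kappa = -1/r$) this equals $\frac{n-3}{r} I_{n-1}$. By the Remark following Theorem~\ref{thm2} this gives $\widetilde{G}^{ij} - H g^{ij}_{0} = \frac{n-3}{r}\, g^{ij}_{0}$, hence for every function $\phi$ on $\Gamma$
\begin{equation*}
\intg \sum^{n-1}_{i,j=1}\left(\widetilde{G}^{ij} - H g^{ij}_{0}\right)\frac{\pa \phi}{\pa \xi_{i}}\frac{\pa \phi}{\pa \xi_{j}}\,\G \;=\; \frac{n-3}{r}\intg \sum^{n-1}_{i,j=1} g^{ij}_{0}\frac{\pa \phi}{\pa \xi_{i}}\frac{\pa \phi}{\pa \xi_{j}}\,\G .
\end{equation*}
In particular, if $\phi$ is a normalized Laplace--Beltrami eigenfunction with eigenvalue $\lamk$, the right-hand side is $\frac{n-3}{r}\lamk$. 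Thus the quadratic form that controls the $O(\e)$ correction, restricted to the $\lamk$-eigenspace $E_{k}$ of the Laplace--Beltrami operator, equals $\frac{n-3}{r}\lamk$ times the $L^{2}(\Gamma)$ inner product --- a scalar multiple of the identity on $E_{k}$. This is precisely the fact that will substitute for the simplicity hypothesis.

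Write $m = \dim E_{k}$, so $\lamk = \lambda_{k} = \cdots = \lambda_{k+m-1}$. For the upper bound I would, as in Section~\ref{simplethm}, correct each normalized $\phi \in E_{k}$ by an $O(\e)$ term in the stretched transverse variable and compute the Rayleigh quotient from \eqref{weak}; by the first step this yields $\frac{\sig_{-}+\sig_{+}}{2}\lamk + \frac{\sig_{+}-\sig_{-}}{4}\cdot\frac{n-3}{r}\lamk\,\e + o(\e)$, with the same constant for all unit $\phi \in E_{k}$ because the geometric term is scalar on $E_{k}$. Applying the min--max principle to the $m$-dimensional trial space spanned by such corrected functions gives the upper bound for $\lambda_{k,\e},\dots,\lambda_{k+m-1,\e}$. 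For the lower bound I would run the Fourier-coefficient estimates of the proof of Theorem~\ref{thm1} (Section~\ref{prth1}), sharpened using the $O(\e)$ rate established there: these show that the leading correction to each $\lambda_{k+\ell,\e}$ is $\tfrac{\sig_{+}-\sig_{-}}{4}$ times the perturbation form evaluated at the normalized $E_{k}$-component of the (stretched) eigenfunction, which again is $\frac{n-3}{r}\lamk$ by the first step, independently of the limiting direction in $E_{k}$. Combining the two bounds gives \eqref{eigenvalue in the ball}.

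The main obstacle is exactly the degeneracy of $\lamk$: without simplicity one cannot pin down a single limiting eigenfunction, and a priori the cluster $\lambda_{k,\e},\dots,\lambda_{k+m-1,\e}$ could split at order $\e$ according to the $m$ eigenvalues of the perturbation form on $E_{k}$. The content of the first step is that on the sphere this form is a scalar matrix, so no splitting occurs at order $\e$ and the whole cluster shares the correction in \eqref{eigenvalue in the ball}. The part I expect to require the most care is making ``no splitting at order $\e$'' rigorous, i.e.\ upgrading the $O(\e)$ convergence of the stretched eigenfunctions from Theorem~\ref{thm1} to a statement precise enough to extract the $\e$-coefficient uniformly over $E_{k}$; given umbilicity this is bookkeeping rather than a new idea.
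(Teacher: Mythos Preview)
Your geometric computation is exactly the paper's: from $b_{ij}=-\tfrac{1}{r}g_{0,ij}$ one gets $\widetilde{G}^{ij}-Hg^{ij}_{0}=\tfrac{n-3}{r}g^{ij}_{0}$, so the perturbation form acts on the $\lamk$-eigenspace as the scalar $\tfrac{n-3}{r}\lamk$. This is indeed the whole point, and your diagnosis of why multiplicity is harmless here is correct.

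Where you diverge from the paper is in the mechanism for extracting the $\e$-coefficient. You propose a two-sided argument: an upper bound via corrected trial functions in the Rayleigh quotient, and a matching lower bound via sharpened Fourier estimates. The paper does not build correctors or run separate bounds at order~$\e$. Instead it reuses the weak-form identity \eqref{eigenequation} with the single test function $\psi=\sum_{p=k(j)}^{k(j+1)-1}\alpha^{p,1}_{k}\Phi_{p}\phi_{1}$ (the projection of $\efunc$ onto $E_{k}\otimes\phi_{1}$, exactly as in Section~\ref{prth1}), expands both sides to order~$\e$, and reads off an \emph{equality} for $\lamkep\sum(\alpha^{p,1}_{k})^{2}$. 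The scalar form of $\widetilde{G}^{ij}-Hg^{ij}_{0}$ makes the geometric integral collapse to $\tfrac{n-3}{r}\lamk\delta_{p_{1}p_{2}}$, and the constancy of $H$ on the sphere lets the orthonormality of the $\Phi_{p}$ kill the cross terms in the $H$-integrals as well. The error control comes from Lemma~\ref{key lemma2} (the $o(\e^{2})$ bound on $\sum_{l\geqslant2}(\alpha^{p,l}_{k})^{2}$), which the paper notes holds \emph{without} simplicity; dividing by $\sum_{p=k(j)}^{k(j+1)-1}(\alpha^{p,1}_{k})^{2}=1+o(1)$ from Lemma~\ref{key lemma1} finishes.

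Your route would work, but it is heavier: constructing the transverse corrector and verifying it gives the right constant is extra bookkeeping the paper avoids, and your lower-bound sketch implicitly needs Lemma~\ref{key lemma2} anyway. The paper's identity-based argument is shorter precisely because it never separates into two inequalities once past Theorem~\ref{thm1}.
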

Note that the remainder term $o(\e)$ depends on $\sig_{-}, \sig_{+}$, and $k$.
The outline of the proof of Theorem \ref{thm3} is as follows: first, we obtain the coefficients of the second fundamental form of $S^{n-1}(r)$. Then by using this and the estimate for the Fourier coefficients which we have already obtained in the proof of Theorem \ref{thm2}, we prove Theorem \ref{thm3}. This proof is in Section \ref{thmsphere}.

The following sections are organized as follows: in section \ref{pre}, we give some preliminaries needed to estimate the eigenvalues. In section \ref{prth1}, we prove Theorem \ref{thm1}. In section \ref{simplethm}, based on the results of section \ref{prth1}, we prove Theorem \ref{thm2}. We obtain more precise asymptotic behavior under the assumption that $k$-th eigenvalue $\lamk$ is simple. In section \ref{thmsphere},  we prove Theorem \ref{thm3}. We obtain a precise asymptotic behavior of the eigenvalues if the interface $\Gamma$ is a sphere.

\section{Preliminaries}\label{pre}
It is known that the $k$-th eigenvalue $\lamkep$ can be characterized by the min-max principle as in \cite{CH}\cite{EdE}:
\begin{lem}[min-max principle]
For any natural number $k \geqslant 1$, 
\begin{equation}\label{minmax}
\lambda_{k,\e} = \sup_{E \subset L^{2}(\Omega{(\e)}), \dim E \leqslant k-1} \inf \{ R_{\e}(\Phi) \,\, | \,\, \Phi \in H^{1}(\Omega(\e)), \Phi \perp E \}, 
\end{equation}
where $R_{\e}(\Phi)$ is defined by
\begin{equation}\label{Rayleigh quotient1}
R_{\e}(\Phi) = \frac{\displaystyle \int_{\Omega(\e)} \sig |\nabla \Phi|^{2} dx}{\displaystyle \int_{\Omega(\e)} |\Phi|^{2} dx}.
\end{equation}
\end{lem}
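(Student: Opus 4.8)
This is the Courant--Fischer min--max principle, and the plan is to derive it from the spectral decomposition of the self-adjoint operator associated with \eqref{weak}. First I would fix the spectral framework. The bilinear form $a_{\e}(\Phi,\psi)=\int_{\Omega(\e)}\sig\,\nabla\Phi\cdot\nabla\psi\,dx$ with form domain $H^{1}(\Omega(\e))$ is symmetric and nonnegative, and since $\sig_{-},\sig_{+}$ are positive constants the quantity $a_{\e}(\Phi,\Phi)+\|\Phi\|_{L^{2}(\Omega(\e))}^{2}$ is equivalent to $\|\Phi\|_{H^{1}(\Omega(\e))}^{2}$; hence $a_{\e}$ is closed and generates a nonnegative self-adjoint operator $A_{\e}$ on $L^{2}(\Omega(\e))$ whose weak eigenvalue equation is precisely \eqref{weak}. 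For $\e$ small, $\partial\Omega(\e)$ consists of two $C^{2}$ hypersurfaces parallel to $\Gamma$, so $\Omega(\e)$ is a bounded Lipschitz domain and the embedding $H^{1}(\Omega(\e))\hookrightarrow L^{2}(\Omega(\e))$ is compact by Rellich--Kondrachov; therefore $A_{\e}$ has compact resolvent, the eigenvalues $\lamkep$ introduced above are discrete and tend to $+\infty$, and the eigenfunctions $\{\Phi_{k,\e}\}_{k\geqslant1}$ form an orthonormal basis of $L^{2}(\Omega(\e))$. The one non-elementary fact I would record from closedness of $a_{\e}$ is the following: if $\Phi=\sum_{j\geqslant1}c_{j}\Phi_{j,\e}\in L^{2}(\Omega(\e))$, then $\Phi\in H^{1}(\Omega(\e))$ exactly when $\sum_{j}\lambda_{j,\e}c_{j}^{2}<\infty$, in which case $a_{\e}(\Phi,\Phi)=\sum_{j}\lambda_{j,\e}c_{j}^{2}$ and $\|\Phi\|_{L^{2}(\Omega(\e))}^{2}=\sum_{j}c_{j}^{2}$, so $R_{\e}(\Phi)=\big(\sum_{j}\lambda_{j,\e}c_{j}^{2}\big)\big/\big(\sum_{j}c_{j}^{2}\big)$ for every $\Phi\in H^{1}(\Omega(\e))\setminus\{0\}$.

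Granting this, the two inequalities are elementary. For the bound $\lamkep\leqslant\sup_{E}\inf\{\cdots\}$, I would test the supremum with $E_{0}=\operatorname{span}\{\Phi_{1,\e},\dots,\Phi_{k-1,\e}\}$, of dimension at most $k-1$: any $\Phi\in H^{1}(\Omega(\e))\setminus\{0\}$ with $\Phi\perp E_{0}$ in $L^{2}(\Omega(\e))$ has $c_{1}=\dots=c_{k-1}=0$, whence $R_{\e}(\Phi)=\big(\sum_{j\geqslant k}\lambda_{j,\e}c_{j}^{2}\big)\big/\big(\sum_{j\geqslant k}c_{j}^{2}\big)\geqslant\lamkep$, with equality at $\Phi=\Phi_{k,\e}$; hence the inner infimum for $E_{0}$ equals $\lamkep$, and so the supremum over admissible $E$ is at least $\lamkep$. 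For the reverse bound, let $E\subset L^{2}(\Omega(\e))$ be any subspace with $\dim E\leqslant k-1$; since $V_{k}:=\operatorname{span}\{\Phi_{1,\e},\dots,\Phi_{k,\e}\}$ has dimension $k>\dim E$, the orthogonal projection onto $E$ restricted to $V_{k}$ has nontrivial kernel by rank--nullity, so there is a nonzero $\Phi\in V_{k}$ with $\Phi\perp E$, and $\Phi\in H^{1}(\Omega(\e))$ as a finite combination of eigenfunctions. Writing $\Phi=\sum_{j=1}^{k}c_{j}\Phi_{j,\e}$ gives $R_{\e}(\Phi)=\big(\sum_{j=1}^{k}\lambda_{j,\e}c_{j}^{2}\big)\big/\big(\sum_{j=1}^{k}c_{j}^{2}\big)\leqslant\lamkep$, so the inner infimum for $E$ is at most $\lamkep$; taking the supremum over all such $E$ yields $\sup_{E}\inf\{\cdots\}\leqslant\lamkep$, and combining the two bounds proves \eqref{minmax}.

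The only genuine obstacle is the first step — verifying that $a_{\e}$ is a closed form, that $A_{\e}$ has compact resolvent (this is where the regularity of $\partial\Omega(\e)$ enters, through Rellich--Kondrachov), and the attendant series representation of $a_{\e}$ on the form domain $H^{1}(\Omega(\e))$. All of this is classical, so in the final write-up I would either cite \cite{CH}\cite{EdE} for the statement or include the short self-contained argument above; in any case none of Theorems \ref{thm1}--\ref{thm3} requires more than this characterization.
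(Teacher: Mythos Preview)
Your argument is correct and is precisely the standard Courant--Fischer derivation via the spectral expansion of the self-adjoint operator with compact resolvent; the two inequalities are handled cleanly and the preliminary functional-analytic facts (closedness of $a_{\e}$, compactness of the resolvent via Rellich--Kondrachov) are identified accurately.

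There is nothing to compare here: the paper does not prove this lemma at all. It is stated as a known fact with references to \cite{CH} and \cite{EdE}, so your write-up actually supplies more than the paper does. Your final remark---that one may simply cite \cite{CH}\cite{EdE}---matches exactly what the paper chose to do.
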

This functional $R_{\e}$ is called a Rayleigh quotient. According to the min-max principle \eqref{minmax}, it is sufficient to estimate the Rayleigh quotient \eqref{Rayleigh quotient1} for the sake of the estimate of the $k$-th eigenvalue $\lamkep$. However, it is not easy to estimate the above Rayleigh quotient because $\Omega_{-}(\e)$ and $\Omega_{+}(\e)$ are perturbed as $\e \to 0$. Thus we will consider to fix the domains by a coordinate transformation. 

Since the interface $\Gamma$ is an $n-1$ dimensional compact manifold in $\Rn$, we can take the union of a finite number of local patches in $\Gamma$, each of which has local coordinates $(\xi_{1}, \xi_{2}, \cdots , \xi_{n-1})$. Note that we regard a point $\xi \in \Gamma$ as its corresponding local coordinate $(\xi_{1}, \xi_{2}, \cdots, \xi_{n-1})$ through a local coordinate map. 
Every $x \in \Omega(\e)$ in the neighborhood of the interface $\Gamma$ is represented by 
\begin{equation}\label{x}
x = \xi + t \nu_{\Gamma}(\xi), \quad \xi \in \Gamma, \,\, |t| < \e.
\end{equation}
We introduce a local coordinate $(\xi_{1}, \xi_{2}, \cdots , \xi_{n-1}, t)$ for $\Gamma \times (-\e, \e)$. Let $g = \left( g_{ij}(\xi,t) \right)$ denote the Riemannian metric associated with this local coordinate. By \eqref{x}, $g_{ij}(\xi,t)$ is given by

\begin{equation}\label{asymp1}
g_{ij}(\xi,t) = \begin{cases}
g_{0,ij}(\xi) + t \widetilde{g}_{0,ij}(\xi) + t^{2}\widehat{g}_{0,ij}(\xi) &\text{if} \quad 1\leqslant i,j \leqslant n-1, \\
0 &\text{if} \quad i = n, j \neq n \,\, \text{or}\,\, i \neq n, j = n, \\
1 &\text{if} \quad i , j = n,
\end{cases}
\end{equation}
where $g_{0} = \left( g_{0, ij}(\xi) \right)$ denote the Riemannian metric associated with the local coordinate $(\xi_{1}, \xi_{2}, \cdots , \xi_{n-1})$ and we denote
\begin{equation*}
\widetilde{g}_{0,ij} = \left(\frac{\pa}{\pa \xi_{i}}, \frac{\pa \nu_{\Gamma}}{\pa \xi_{j}}\right) + \left(\frac{\pa}{\pa \xi_{j}}, \frac{\pa \nu_{\Gamma}}{\pa \xi_{i}}\right), \quad 
\widehat{g}_{0,ij} = \left(\frac{\pa \nu_{\Gamma}}{\pa \xi_{i}}, \frac{\pa \nu_{\Gamma}}{\pa \xi_{j}}\right).
\end{equation*}
Here $\pa / \pa \xi_{i}$ and $\pa / \pa \xi_{j}$ are tangent vectors on $\xi \in \Gamma$ and $(\cdot , \cdot)$ is the Euclidean inner product.
Let $(b_{ij})_{1\leqslant i,j \leqslant n-1}$ denote the coefficients of the second fundamental form of $\Gamma$. In the local coordinate, $b_{ij} = \left(\pa^{2} / \pa \xi_{i} \pa \xi_{j}, \nu_{\Gamma} \right)$.
By the definition of $\widetilde{g}_{0,ij}$, we have
\begin{equation}\label{second}
\left(\frac{\pa}{\pa \xi_{i}}, \frac{\pa \nu_{\Gamma}}{\pa \xi_{j}}\right) = -\left( \frac{\pa^{2}}{\pa \xi_{i} \pa \xi_{j}}, \nu_{\Gamma} \right) = -b_{ij}.
\end{equation}
Therefore we obtain
\begin{equation}\label{asymp2}
\widetilde{g}_{0,ij} = -2b_{ij}.
\end{equation}

Denote the inverse matrix of $(g_{0, ij})$ by $(g^{ij}_{0})$ and $G_{0} = \det (g_{0,ij})$. Similarly, let $(g^{ij})$ denote the inverse matrix of $(g_{ij})$ and let $G = \det (g_{ij})$. 
Then by \eqref{asymp1}, we can obtain the asymptotic formulas for the inverse metric tensor $g^{ij}$ and the Jacobian $\sqrt{G}$ as follows:
\begin{align}
g^{ij}(\xi,t) &= g^{ij}_{0}(\xi) + t \widetilde{G}^{ij}(\xi) + O(t^{2}) \quad \text{as} \,\, t \to 0, \label{important formula1}\\
\sqrt{G(\xi,t)} &= \sqrt{G_{0}}\left(1 - t H(\xi)\right) + O(t^{2}) \quad \text{as} \,\, t \to 0, \label{important formula2}
\end{align}
where $\widetilde{G}^{ij}(\xi) = 2 \sum^{n-1}_{k,l = 1}g^{ik}_{0}(\xi) b_{kl}(\xi) g^{lj}_{0}(\xi)$, and $H(\xi)$ is the mean curvature of $\Gamma$ at $\xi \in \Gamma$ with respect to $\nu_{\Gamma}$ (defined as the sum of the principle curvatures of $\Gamma$). This asymptotic formulas \eqref{important formula1} and \eqref{important formula2}, which are given by Schatzman's paper\cite[Section $10$]{S}, will play an important role to obtain the asymptotic behavior of the eigenvalues.

By using the local coordinate $(\xi_{1}, \xi_{2}, \cdots , \xi_{n-1}, t)$, the norm of the gradient of $\Phi$ in \eqref{Rayleigh quotient1} is 
\begin{equation*}
|\nabla \Phi|^{2} = |\nabla_{g} \Phi|^{2} + \left(\frac{\partial \Phi}{\partial t}\right)^{2},
\end{equation*}
where
\begin{equation*}
|\nabla_{g} \Phi|^{2} = \sum^{n-1} _{i,j=1} g^{ij} \frac{\pa \Phi}{\pa \xi_{i}} \frac{\pa \Phi}{\pa \xi_{j}}.
\end{equation*}
Similarly we define 
\begin{equation*}
|\nabla_{g_{0}} \Phi|^{2} = \sum^{n-1} _{i,j=1} g^{ij}_{0} \frac{\pa \Phi}{\pa \xi_{i}} \frac{\pa \Phi}{\pa \xi_{j}}.
\end{equation*}
In terms of the local coordinate $(\xi_{1}, \xi_{2}, \cdots , \xi_{n-1}, t)$, the Rayleigh quotient reads
\begin{equation}\label{ray}
R_{\e}(\Phi) = \frac{\displaystyle \int_{-\e}^{\e}\int_{\Gamma} \sig \left( |\nabla_{g} \Phi|^{2} + \left( \frac{\partial \Phi}{\partial t} \right)^{2} \right) \sqrt{G(\xi,t)} d\xi dt}{\displaystyle \int_{-\e}^{\e}\int_{\Gamma} |\Phi|^{2}\sqrt{G(\xi,t)} d\xi dt}.
\end{equation}
By introducing the variable $\tau \in (-1, 1)$ by $t = \e \tau$ and transform $\widetilde{\Phi}_{\e}(\xi,\tau) = \Phi(\xi,\e \tau)$, we rewrite the min-max principle and the Rayleigh quotient as follows:
\begin{equation*}
\lambda_{k,\e} = \sup_{E \subset L^{2}(\Gamma \times (-1,1)),\dim E \leqslant k-1} \inf \{ R_{\e}(\widetilde{\Phi}_{\e}) \,\, | \,\, \widetilde{\Phi}_{\e} \in H^{1}(\Gamma \times (-1,1)), \widetilde{\Phi}_{\e} \perp_{\e} E \},
\end{equation*}
where 
\begin{equation}\label{ray2}
R_{\e}(\widetilde{\Phi}_{\e}) = \frac{\displaystyle \inta \sig \left( |\nabla_{g} \widetilde{\Phi}_{\e}|^{2} + \frac{1}{\e^{2}}\left( \frac{\pa \widetilde{\Phi}_{\e}}{\pa \tau} \right)^{2} \right) \sqrt{G(\xi,\e \tau)} d\xi d\tau}{\displaystyle \inta |\widetilde{\Phi}_{\e}|^{2} \sqrt{G(\xi,\e \tau)} d\xi d\tau}
\end{equation}
and $\widetilde{\Phi}_{\e} \perp_{\e} E$ means that for any $\Psi \in E$,
\begin{equation*}
\inta \widetilde{\Phi}_{\e}(\xi,\tau) \Psi(\xi,\tau) \sqrt{G(\xi,\e \tau)} d\xi d\tau = 0.
\end{equation*}

We will estimate the eigenvalue $\lamkep$ by using the Rayleigh quotient \eqref{ray2}.
In the following sections, we will denote $\sqrt{G(\xi,\e \tau)}$ by $\sqrt{G_{\e}}$ for simplicity and by $C$ a positive constant independent of $\e$. The same letter $C$ will be used to denote different constants.
\section{Proof of Theorem \bf{\ref{thm1}}}\label{prth1}
\subsection{Upper estimate of eigenvalues}
Let $k \geqslant 1$ and $E \subset L^{2}(\Gamma \times (-1,1))$ be any subspace such that $\dim E \leqslant k-1$. We define
\begin{equation*}
L_{k} = \text{Span} \left[ \frac{1}{\sqrt{2}}\Phi_{1}(\xi),\frac{1}{\sqrt{2}}\Phi_{2}(\xi), \ldots \frac{1}{\sqrt{2}}\Phi_{k}(\xi) \right], 
\end{equation*}
where $\Phi_{k}$ is the normalized eigenfunction associated with the $k$-th eigenvalue $\lamk$ of the Laplace--Beltrami operator on $\Gamma$. 
Since $\dim E \leqslant k-1$ and $\dim L_{k} = k$, there exist constants $\{ c_{p}(\e) \}^{k}_{p=1}$ such that
\begin{equation*}
\widetilde{\Psi}_{\e}(\xi,\tau) = \sum_{p=1}^{k} c_{p}(\e) \frac{1}{\sqrt{2}}\Phi_{p}(\xi), \quad \widetilde{\Psi}_{\e} \perp_{\e} E. 
\end{equation*}
We substitute $\widetilde{\Psi}_{\e}$ into the Rayleigh quotient \eqref{ray2} as a test function. Then we have 
\begin{align*}
&\inf \{ R_{\e}(\widetilde{\Phi}_{\e}) \,\, | \,\, \widetilde{\Phi}_{\e} \in H^{1}(\Gamma \times (-1,1)), \widetilde{\Phi}_{\e} \perp_{\e} E \} \\
\leqslant R_{\e}(\widetilde{\Psi}_{\e}) 
&= \dfrac{\sig_{-} \displaystyle \intm |\nabla_{g} \widetilde{\Psi}_{\e}|^{2} \Gep
+ \sig_{+} \displaystyle \intp |\nabla_{g} \widetilde{\Psi}_{\e}|^{2} \Gep }{\displaystyle \inta |\widetilde{\Psi}_{\e}|^{2} \Gep} =: \dfrac{N_{1}(\e) + N_{2}(\e)}{M(\e)}, 
\end{align*}
where 
\begin{align*}
N_{1}(\e) &= \sig_{-} \intm |\nabla_{g} \widetilde{\Psi}_{\e}|^{2}\Gep, \\
N_{2}(\e) &= \sig_{+} \intp |\nabla_{g} \widetilde{\Psi}_{\e}|^{2}\Gep, \\
M(\e) &= \inta |\widetilde{\Psi}_{\e}|^{2}\Gep. 
\end{align*}
By requiring the normalization, 
\begin{equation*}
M(\e) = \inta |\widetilde{\Psi}_{\e}|^{2}\Gep = 1.
\end{equation*}
We obtain 
\begin{equation}\label{coefficient}
\sum^{k}_{p=1}c_{p}(\e)^2 = 1 + O(\e).
\end{equation}
By using the asymptotic formulas for the inverse metric tensor \eqref{important formula1} and the Jacobian \eqref{important formula2}, we calculate both terms $N_{1}(\e)$ and $N_{2}(\e)$. 
\begin{align*}
N_{1}(\e) &= \sig_{-} \intm |\nabla_{g} \widetilde{\Psi}_{\e}|^{2} \Gep \\
&= \frac{\sig_{-}}{2} \sum^{k}_{p,q=1} c_{p}(\e) c_{q}(\e) \intm \nabla_{g_{0}} \Phi_{p} \cdot \nabla_{g_{0}} \Phi_{q} \sqrt{G_{0}}d\xi d\tau + O(\e) \\
&= \frac{\sig_{-}}{2} \sum^{k}_{p=1} \lambda_{p} c_{p}(\e)^2 + O(\e). 
\end{align*}
Similary, we have 
\begin{align*}
N_{2}(\e) &= \sig_{+} \intp |\nabla_{g} \widetilde{\Psi}_{\e}|^{2} \Gep \\
&= \frac{\sig_{+}}{2} \sum^{k}_{p=1} \lambda_{p} c_{p}(\e)^2 + O(\e). 
\end{align*}
Thus we have
\begin{align*}
\dfrac{N_{1}(\e) + N_{2}(\e)}{M(\e)} &= \frac{\sig_{-}}{2} \sum^{k}_{p=1} \lambda_{p} c_{p}(\e)^2 + \frac{\sig_{+}}{2} \sum^{k}_{p=1} \lambda_{p} c_{p}(\e)^2 + C\e \\
&= \dfrac{\sig_{-}+\sig_{+}}{2} \sum^{k}_{p=1} \lambda_{p} c_{p}(\e)^2 + C\e \\
&\leqslant \dfrac{\sig_{-}+\sig_{+}}{2} \lamk \sum^{k}_{p=1}c_{p}(\e)^2 + C\e \\
&= \dfrac{\sig_{-}+\sig_{+}}{2} \lamk + C\e.
\end{align*}
Here we used the monotonicity of the eigenvalues $\{ \lamk \}_{k\geqslant1}$ and \eqref{coefficient}. Therefore we obtain the following upper estimate of the eigenvalue $\lamkep$. 
\begin{equation}\label{upper estimate}
\lamkep \leqslant \dfrac{\sig_{-}+\sig_{+}}{2} \lamk + C\e.
\end{equation}

\subsection{Lower estimate of eigenvalues}
For any $\psi \in H^{1}(\Gamma \times (-1,1))$, we consider the weak form of \eqref{P} in the local coordinate:
\begin{equation}\label{eigenequation}
\inta \sig \left( \nabla_{g} \efunc \, \cdot \, \nabla_{g} \psi + \frac{1}{\e^{2}} \frac{\pa \efunc}{\pa \tau} \frac{\pa \psi}{\pa \tau} \right) \Gep
= \lamkep \inta \efunc \psi \Gep,  
\end{equation}
where $\lamkep$ is the $k$-th eigenvalue and $\efunc$ is the $k$-th eigenfunction associated with $\lamkep$. We normalize $\efunc$ as follows:
\begin{equation}\label{normalize}
\inta |\efunc|^{2} \Gep = 1.
\end{equation}
If we take $\psi = \efunc$, then we have 
\begin{equation}\label{eigenvalue}
\lamkep = \inta \sig \left( |\nabla_{g} \efunc|^{2} + \frac{1}{\e^{2}} \left( \frac{\pa \efunc}{\pa \tau} \right)^{2} \right) \Gep.
\end{equation}

The main idea to get the lower estimate of eigenvalues $\lamkep$ is to take a test function which projects $\efunc$ onto the eigenspace of $\lamk$. 
For that reason, we consider the Fourier expansions of $\efunc$. Let $\Phi_{p}$ $(p \geqslant 1)$ denote the $p$-th normalized eigenfunction of Laplace--Beltrami operator on $\Gamma$ and $\phi_{l}$ ($l \geqslant 1$) the $l$-th normalized eigenfunction of the following eigenvalue problem: 
\begin{equation}\label{eigenvalue problem for thickness direction}
-\frac{d^{2} \phi}{d \tau^{2}}(\tau) = \mu \phi(\tau) \quad (\tau \in (-1, 1) ), \quad \phi'(-1) = 0, \quad \phi'(1) = 0. 
\end{equation}
Note that the eigenvalue problem \eqref{eigenvalue problem for thickness direction} can be solved explicitly as follows:
\begin{equation}\label{eigenvalue and eigenfunction for thickness direction}
\mu_{l}  = \frac{(l-1)^{2}}{4} \pi^{2} \,\, (l \geqslant 1), \quad 
\phi_{l} = \begin{cases}
\dfrac{1}{\sqrt{2}} &(l = 1), \\
\cos (\dfrac{l-1}{2} \pi \tau ) &(l \geqslant 2, \, l \,\, \mathrm{odd}), \\
\sin (\dfrac{l-1}{2} \pi \tau ) &(l \geqslant 2, \, l \,\, \mathrm{even}).
\end{cases}
\end{equation}
Then, since $\left\{ \Phi_{p} \phi_{l} \right\}_{p \geqslant 1, l \geqslant 1}$ is an orthonormal basis of $L^{2}\left(\Gamma \times (-1,1) \right)$, the $k$-th eigenfunction $\efunc$ admits the following the Fourier expansions: 
\begin{align}\label{Fourier expansion}
\efunc(\xi, \tau) &= \sum^{\infty}_{p,l=1} \alpha^{p, l}_{k} (\e) \Phi_{p}(\xi) \phi_{l}(\tau), \\
\alpha^{p, l}_{k} (\e) &= \inta \efunc(\xi, \tau)\Phi_{p}(\xi) \phi_{l}(\tau) \G d\tau.
\end{align}

First of all, we will get some estimates for the Fourier coefficients $\alpha^{p, l}_{k}(\e)$. If we substitute the Fourier expansions \eqref{Fourier expansion} into \eqref{eigenvalue}, then we can obtain some estimates for the Fourier coefficients $\alpha^{p, l}_{k}(\e)$ by using the upper bound of $\lamkep$. 
\begin{lem}
The following estimates hold:
\begin{align}
\sum^{\infty}_{p=1, l=2} \alpha^{p, l}_{k}(\e)^{2} &= O(\e^{2}) \,\,\, \text{as} \,\,\, \e \to 0, \label{fourier coefficient estimate2}\\
\sum^{\infty}_{p=1} \alpha^{p, 1}_{k}(\e)^{2} &= 1 + O(\e) \,\,\, \text{as} \,\,\, \e \to 0. \label{fourier coefficient estimate3}
\end{align}
\end{lem}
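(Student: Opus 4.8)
\emph{Proof plan.} The plan is to combine Parseval's identity for the product basis $\{\Phi_p\phi_l\}_{p,l\geq1}$ with the eigenvalue identity \eqref{eigenvalue} and the already established upper bound \eqref{upper estimate}. The only structural input is that $\phi_1$ is the only constant $\tau$-mode, so that $\|\phi_l'\|_{L^2(-1,1)}^2=\mu_l\geq\pi^2/4$ for every $l\geq2$.

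I would begin with two preliminary reductions. From \eqref{important formula2} one has $\sqrt{G_\e}=\sqrt{G_0}\,(1+O(\e))$ uniformly on $\Gamma\times(-1,1)$, so the measures $\Gt$ and $\Gep$ are uniformly comparable with $\Gep=(1+O(\e))\,\Gt$. Since $\{\Phi_p\phi_l\}$ is an orthonormal basis of $L^2(\Gamma\times(-1,1))$ with respect to $\Gt$, the normalization \eqref{normalize} together with Parseval's identity gives
\[
\sum_{p,l\geq1}\alpha^{p,l}_k(\e)^2=\inta|\efunc|^2\,\Gt=(1+O(\e))\inta|\efunc|^2\,\Gep=1+O(\e).
\]
Moreover, since $\sig\geq\min(\sig_-,\sig_+)>0$ and both summands in \eqref{eigenvalue} are nonnegative,
\[
\frac{\min(\sig_-,\sig_+)}{\e^2}\inta\Bigl(\frac{\pa\efunc}{\pa\tau}\Bigr)^2\Gep\leq\lamkep\leq\frac{\sig_-+\sig_+}{2}\lamk+C\e,
\]
the last inequality being \eqref{upper estimate}; hence $\inta(\pa\efunc/\pa\tau)^2\,\Gep=O(\e^2)$ and, by the comparability of measures, $\inta(\pa\efunc/\pa\tau)^2\,\Gt=O(\e^2)$.

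Next I would expand the $\tau$-derivative in the product basis. As $\efunc\in H^1(\Gamma\times(-1,1))$, the series \eqref{Fourier expansion} converges in $H^1$, so $\pa\efunc/\pa\tau=\sum_{p,l\geq1}\alpha^{p,l}_k(\e)\,\Phi_p\,\phi_l'$ in $L^2$. Now $\phi_1'\equiv0$, while integrating $-\phi_l''=\mu_l\phi_l$ by parts using the Neumann conditions in \eqref{eigenvalue problem for thickness direction} gives $\int_{-1}^1\phi_l'\phi_m'\,d\tau=\mu_l\delta_{lm}$; combined with the $L^2(\Gamma)$-orthonormality of the $\Phi_p$ this yields
\[
\inta\Bigl(\frac{\pa\efunc}{\pa\tau}\Bigr)^2\Gt=\sum_{p\geq1}\sum_{l\geq2}\alpha^{p,l}_k(\e)^2\,\mu_l\ \geq\ \frac{\pi^2}{4}\sum_{p\geq1,\,l\geq2}\alpha^{p,l}_k(\e)^2,
\]
using $\mu_l\geq\mu_2=\pi^2/4$ from \eqref{eigenvalue and eigenfunction for thickness direction}. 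Since the left-hand side is $O(\e^2)$, this proves \eqref{fourier coefficient estimate2}, and subtracting it from the Parseval identity above gives $\sum_{p\geq1}\alpha^{p,1}_k(\e)^2=1+O(\e)$, which is \eqref{fourier coefficient estimate3}.

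I do not anticipate a genuine obstacle here: this lemma is precisely the step that converts the existing upper bound into quantitative control of the Fourier coefficients. The only points requiring mild care are the uniformity of the $O(\e)$ in the measure comparison (which is exactly \eqref{important formula2}) and the term-by-term differentiation of the Fourier series in $\tau$; the latter can be bypassed, if preferred, by applying the scalar Poincar\'e--Wirtinger inequality $\int_{-1}^1|f-\bar f|^2\,d\tau\leq\frac{4}{\pi^2}\int_{-1}^1|f'|^2\,d\tau$ to $f=\efunc(\xi,\cdot)$ for a.e.\ $\xi$ and integrating over $\Gamma$, noting that $\sum_{p\geq1,\,l\geq2}\alpha^{p,l}_k(\e)^2=\inta|\efunc-\overline{\efunc}|^2\,\Gt$ with $\overline{\efunc}(\xi)=\frac12\int_{-1}^1\efunc(\xi,\tau)\,d\tau$.
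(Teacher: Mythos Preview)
Your proposal is correct and follows essentially the same route as the paper: both use the upper bound \eqref{upper estimate} together with \eqref{eigenvalue} to obtain $\inta(\pa\efunc/\pa\tau)^2\,\Gt=O(\e^2)$, then invoke the spectral gap $\mu_l\geq\mu_2=\pi^2/4$ for $l\geq2$ to deduce \eqref{fourier coefficient estimate2}, and finally combine with Parseval and the normalization \eqref{normalize} to obtain \eqref{fourier coefficient estimate3}. Your write-up is in fact more careful than the paper's about the measure comparison $\sqrt{G_\e}=(1+O(\e))\sqrt{G_0}$ and about justifying the identity $\inta(\pa\efunc/\pa\tau)^2\,\Gt=\sum_{p,l}\mu_l(\alpha^{p,l}_k)^2$, and the Poincar\'e--Wirtinger alternative you mention is a clean way to sidestep any concern about $H^1$-convergence of the Fourier series.
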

\begin{proof}
By using the upper bound of $\lamkep$, we have
\begin{equation}\label{tau estimate}
\inta \left( \frac{\pa \efunc}{\pa \tau} \right)^{2} \Gt \leqslant C\e^{2}.
\end{equation}
We substitute the Fourier expansions \eqref{Fourier expansion} into \eqref{tau estimate}, then we obtain
\begin{equation*}
\sum^{\infty}_{p=1, l=2} \mu_{l} (\alpha^{p, l}_{k})^{2} \leqslant C\e^{2}.
\end{equation*}
Note $\mu_{l}\geqslant \mu_{2} = \pi^{2}/4$ for $l \geqslant 2$. Thus we get \eqref{fourier coefficient estimate2}.
Moreover, by the normalization \eqref{normalize}, we obtain 
\begin{equation}\label{esti}
\sum^{\infty}_{p,l = 1} (\alpha^{p, l}_{k})^{2} = 1 + O(\e).
\end{equation}
Combining the estimate \eqref{fourier coefficient estimate2} with \eqref{esti}, we obtain \eqref{fourier coefficient estimate3}.
\end{proof}

Next, we will consider the lower estimate of eigenvalue $\lamkep$. As a test function of \eqref{eigenequation}, we take
\begin{equation}\label{test}
\psi = \sum^{k(j+1) -1}_{p=k(j)} \alpha^{p,1}_{k} \Phi_{p} \phi_{1},
\end{equation}
where $\{ k(j) \}^{\infty}_{j=1}$ is the increasing sequence of natural numbers defined by 
\begin{equation}\label{dd}
k(1) = 1, \quad k(j+1) = \min \{ k \in \N \,\,|\,\, \lamk > \lambda_{k(j)} \}.
\end{equation}
We note that for any $k$, there exists a unique $j$ such that $k(j) \leqslant k < k(j + 1)$.
By the definition \eqref{dd}, the multiplicity of $\lamk$ is given by $k(j+1) - k(j)$. Thus the test function \eqref{test} taken as above is to project onto the eigenspace of $\lamk$.
By using the asymptotic behavior of the inverse metric tensor \eqref{important formula1} and the Jacobian \eqref{important formula2} and also using the orthonormality of eigenfunction $\Phi_{p}$ and $\phi_{l}$, the right hand side of \eqref{eigenequation} is 
\begin{equation}\label{RHS}
\lamkep \inta \efunc \psi \Gep = \lamkep \sum^{k(j+1) -1}_{p=k(j)} (\alpha^{p,1}_{k})^{2} + O(\e).
\end{equation}
Moreover, the left hand side of \eqref{eigenequation} is 
\begin{align*}
& \inta \sig \left( \nabla_{g} \efunc \cdot \nabla_{g} \psi + \frac{1}{\e^{2}} \frac{\pa \efunc}{\pa \tau} \frac{\pa \psi}{\pa \tau} \right) \Gep \notag \\
&= \sum^{k(j+1) -1}_{p=k(j)} \sum^{\infty}_{l=1} \alpha^{p,l}_{k}\alpha^{p,1}_{k} \lambda_{p} \left( \sig_{-} \int^{0}_{-1} \phi_{l}\phi_{1} d\tau + \sig_{+} \int^{1}_{0} \phi_{l}\phi_{1} d\tau \right) + O(\e) \notag \\
&=  \frac{\sig_{-}+\sig_{+}}{2} \lamk \sum^{k(j+1) -1}_{p=k(j)} (\alpha^{p,1}_{k})^{2} + \frac{\sig_{+}-\sig_{-}}{\sqrt{2}} \lamk \sum^{k(j+1) -1}_{p=k(j)} \sum^{\infty}_{l=2} \alpha^{p,l}_{k}\alpha^{p,1}_{k} \int^{1}_{0} \phi_{l} d\tau + O(\e), 
\end{align*}
here we used $\lambda_{p} = \lamk$ $(k(j) \leqslant p \leqslant k(j+1)-1)$ and $\int^{1}_{-1} \phi_{l} d\tau = 0$ $(l \geqslant 2)$. Let us define 
\begin{equation*}
I_{1} = \frac{\sig_{+}-\sig_{-}}{\sqrt{2}} \lamk \sum^{k(j+1) -1}_{p=k(j)} \sum^{\infty}_{l=2} \alpha^{p,l}_{k}\alpha^{p,1}_{k} \int^{1}_{0} \phi_{l} d\tau. 
\end{equation*}

Now we will estimate the term $I_{1}$.  
\begin{align*}
I_{1} &= \frac{\sig_{+}-\sig_{-}}{\sqrt{2}} \lamk \sum^{k(j+1) -1}_{p=k(j)} \sum^{\infty}_{l=2} \alpha^{p,l}_{k}\alpha^{p,1}_{k} \int^{1}_{0} \phi_{l} d\tau \\
&= \frac{\sig_{+}-\sig_{-}}{\sqrt{2}} \lamk \sum^{k(j+1) -1}_{p=k(j)} \sum^{\infty}_{m=1} \left( \alpha^{p,2m}_{k}\alpha^{p,1}_{k} \int^{1}_{0} \phi_{2m} d\tau + \alpha^{p,2m+1}_{k}\alpha^{p,1}_{k} \int^{1}_{0} \phi_{2m+1} d\tau \right).
\end{align*}
By \eqref{eigenvalue and eigenfunction for thickness direction}, the following integrals can be calculated explicitly for $m \geqslant 1$: 
\begin{align*}
\int^{1}_{0} \phi_{2m} d\tau &= \frac{2}{(2m-1)\pi}, \\
\int^{1}_{0} \phi_{2m+1} d\tau &= 0.
\end{align*}
Then we have
\begin{equation*}
I_{1} = \frac{\sig_{+}-\sig_{-}}{\sqrt{2}} \lamk \sum^{k(j+1) -1}_{p=k(j)} \sum^{\infty}_{m=1} \alpha^{p,2m}_{k}\alpha^{p,1}_{k} \frac{2}{(2m-1)\pi}.
\end{equation*}
Thus we get the following estimate:
\begin{align*}
|I_{1}| &\leqslant \frac{\sqrt{2}|\sig_{+}-\sig_{-}|}{\pi}\lamk \sum^{k(j+1) -1}_{p=k(j)} \sum^{\infty}_{m=1} |\alpha^{p,2m}_{k}| |\alpha^{p,1}_{k}|\frac{1}{2m-1} \\
&\leqslant \frac{\sqrt{2}|\sig_{+}-\sig_{-}|}{\pi}\lamk \sum^{k(j+1) -1}_{p=k(j)} \sum^{\infty}_{m=1} \left( \frac{\e}{2} \cdot \frac{(\alpha^{p,1}_{k})^{2}}{(2m-1)^{2}} + \frac{1}{2\e} \cdot (\alpha^{p,2m}_{k})^{2} \right) \\
&\leqslant \frac{|\sig_{+}-\sig_{-}|}{\sqrt{2}\pi}\lamk \e \sum^{k(j+1) -1}_{p=k(j)} (\alpha^{p,1}_{k})^{2} \cdot \frac{\pi^{2}}{8} + \frac{|\sig_{+}-\sig_{-}|}{\sqrt{2}\pi \e}\lamk \sum^{\infty}_{p=1} \sum^{\infty}_{l=2} (\alpha^{p,l}_{k})^{2}, 
\end{align*} 
where we used Cauchy's inequality and the known identity $\sum^{\infty}_{m=1} 1/(2m-1)^{2} = \pi^{2}/8$. Moreover, by using the estimate for the Fourier coefficients \eqref{fourier coefficient estimate2} we have
\begin{equation*}
|I_{1}| \leqslant C\lamk \e \sum^{k(j+1) -1}_{p=k(j)} (\alpha^{p,1}_{k})^{2} + C\lamk \e. 
\end{equation*}
Thus we get the following estimate for $I_{1}$,  
\begin{equation*}
I_{1} \geqslant - C\lamk \e \sum^{k(j+1) -1}_{p=k(j)} (\alpha^{p,1}_{k})^{2} - C\lamk \e.
\end{equation*}
Therefore, we obtain the estimate of the left hand side of \eqref{eigenequation}. 
\begin{align}
&\frac{\sig_{-}+\sig_{+}}{2} \lamk \sum^{k(j+1) -1}_{p=k(j)} (\alpha^{p,1}_{k})^{2} + I_{1} -C\e \notag \\
&\geqslant \frac{\sig_{-}+\sig_{+}}{2} \lamk \sum^{k(j+1) -1}_{p=k(j)} (\alpha^{p,1}_{k})^{2} \left(1 - \frac{2C}{\sig_{-}+\sig_{+}}\e \right) - C\lamk \e -C\e. \label{LHS}
\end{align}
Combining the estimate of the right hand side of \eqref{RHS} with that of the left hand side of \eqref{LHS} yields
\begin{equation}\label{estimate1}
\lamkep \sum^{k(j+1) -1}_{p=k(j)} (\alpha^{p,1}_{k})^{2} \geqslant \frac{\sig_{-}+\sig_{+}}{2} \lamk \sum^{k(j+1) -1}_{p=k(j)} (\alpha^{p,1}_{k})^{2} \left( 1 - \frac{2C}{\sig_{-}+\sig_{+}}\e \right) - C\lamk \e - C\e.
\end{equation}
From the above estimate, it will be necessary to show the following lemma to obtain the lower estimate of eigenvalues.
\begin{lem}\label{key lemma1}
The following estimate holds:
\begin{equation}\label{lemma1}
\sum^{k(j+1) -1}_{p=k(j)} \alpha^{p, 1}_{k}(\e)^{2} = 1 + o(1) \,\,\, \text{as} \,\,\, \e \to 0.
\end{equation}
\end{lem}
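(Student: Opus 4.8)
The plan is to argue by compactness, extracting convergent subsequences of the first $k$ rescaled eigenfunctions and identifying their limits with eigenfunctions of the Laplace--Beltrami operator on $\Gamma$. Fix any sequence $\e\to 0$. By the upper estimate \eqref{upper estimate} the eigenvalues $\lambda_{1,\e},\ldots,\lamkep$ stay bounded, so by \eqref{eigenvalue} — applied to each index $i\leqslant k$ — together with the uniform ellipticity of $\bigl(g^{ij}(\xi,\e\tau)\bigr)$ coming from \eqref{important formula1} and the bound $\sqrt{G_{\e}}\geqslant c_{0}>0$ for small $\e$, each rescaled eigenfunction $\widetilde{\Phi}_{i,\e}$ satisfies $\int_{\Gamma\times(-1,1)}\bigl(|\nabla_{g_{0}}\widetilde{\Phi}_{i,\e}|^{2}+(\pa_{\tau}\widetilde{\Phi}_{i,\e})^{2}\bigr)\sqrt{G_{0}}\,d\xi\,d\tau\leqslant C$. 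Passing to a subsequence, for every $i\leqslant k$ we may assume $\lambda_{i,\e}\to\Lambda_{i}^{*}$ and $\widetilde{\Phi}_{i,\e}\rightharpoonup\psi_{i}$ weakly in $H^{1}(\Gamma\times(-1,1))$ and strongly in $L^{2}$. By \eqref{tau estimate} (again for each $i$) one has $\pa_{\tau}\widetilde{\Phi}_{i,\e}\to 0$ in $L^{2}$, so $\psi_{i}=\psi_{i}(\xi)$ is $\tau$--independent; passing to the limit in the normalization \eqref{normalize} gives $2\intg|\psi_{i}|^{2}\,\G=1$, and passing to the limit in the orthonormality relations $\inta\widetilde{\Phi}_{i,\e}\widetilde{\Phi}_{j,\e}\,\Gep=\delta_{ij}$ yields $2\intg\psi_{i}\psi_{j}\,\G=\delta_{ij}$, so the $\psi_{i}$ are nonzero and mutually orthogonal in $L^{2}(\Gamma)$.

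Next I would put a $\tau$--independent test function $\psi=\psi(\xi)\in H^{1}(\Gamma)$ into the weak form \eqref{eigenequation} written for index $i$: the $\e^{-2}$ term vanishes identically, and letting $\e\to 0$ — using $g^{ij}(\xi,\e\tau)\to g^{ij}_{0}(\xi)$ and $\sqrt{G_{\e}}\to\sqrt{G_{0}}$ uniformly by \eqref{important formula1} and \eqref{important formula2}, together with the strong $L^{2}$ convergence of $\widetilde{\Phi}_{i,\e}$ — the identity passes to $(\sig_{-}+\sig_{+})\intg\nabla_{g_{0}}\psi_{i}\cdot\nabla_{g_{0}}\psi\,\G=2\Lambda_{i}^{*}\intg\psi_{i}\psi\,\G$ for all such $\psi$. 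Hence each $\psi_{i}$ is a Laplace--Beltrami eigenfunction on $\Gamma$ with eigenvalue $\mu_{i}^{*}:=2\Lambda_{i}^{*}/(\sig_{-}+\sig_{+})$, and $\mu_{1}^{*}\leqslant\cdots\leqslant\mu_{k}^{*}$. Since $\psi_{1},\ldots,\psi_{i}$ are mutually orthogonal eigenfunctions with eigenvalues $\leqslant\mu_{i}^{*}$, their span is an $i$--dimensional subspace of $H^{1}(\Gamma)$ on which the Laplace--Beltrami Rayleigh quotient is $\leqslant\mu_{i}^{*}$; by the min--max principle on $\Gamma$ this forces $\lambda_{i}\leqslant\mu_{i}^{*}$, i.e. $\Lambda_{i}^{*}\geqslant\tfrac{\sig_{-}+\sig_{+}}{2}\lambda_{i}$, while \eqref{upper estimate} gives the reverse inequality. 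Therefore $\mu_{i}^{*}=\lambda_{i}$ for every $i\leqslant k$; in particular $\psi_{k}$ lies in the eigenspace of $\lamk$.

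To conclude, I would compute $\lim_{\e\to 0}\sum_{\lambda_{p}=\lamk}\alpha^{p,1}_{k}(\e)^{2}$. The set of indices $p$ with $\lambda_{p}=\lamk$ is finite, and by the strong $L^{2}$ convergence $\alpha^{p,1}_{k}(\e)=\inta\widetilde{\Phi}_{k,\e}\,\Phi_{p}\,\phi_{1}\,\G d\tau\to\sqrt{2}\,\intg\psi_{k}\Phi_{p}\,\G$; hence $\sum_{\lambda_{p}=\lamk}\alpha^{p,1}_{k}(\e)^{2}\to 2\sum_{\lambda_{p}=\lamk}\bigl(\intg\psi_{k}\Phi_{p}\,\G\bigr)^{2}$, and since the $\Phi_{p}$ with $\lambda_{p}=\lamk$ form an orthonormal basis of the $\lamk$--eigenspace and $\psi_{k}$ lies in it, this equals $2\intg|\psi_{k}|^{2}\,\G=1$. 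As the original sequence was arbitrary, \eqref{lemma1} follows.

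The step I expect to be the main obstacle is that, at this point in the argument, no lower bound on $\lamkep$ is yet available, so a priori the $\phi_{1}$--part of $\efunc$ could leak into the span of the lower modes $\Phi_{p}\phi_{1}$ with $\lambda_{p}<\lamk$; the role of the compactness argument is precisely that the min--max comparison, fed the mutual orthogonality of $\psi_{1},\ldots,\psi_{k}$, pins every $\mu_{i}^{*}$ to be exactly $\lambda_{i}$ and thereby forces $\psi_{k}$ into the precise $\lamk$--eigenspace. (A variant that stays inside the Fourier framework of the paper is an induction on $k$: testing \eqref{eigenequation} against truncations of $\sum_{\lambda_{p}>\lamk}\alpha^{p,1}_{k}\Phi_{p}\phi_{1}$ kills the high modes by \eqref{upper estimate}, while the inductive hypothesis that \eqref{lemma1} holds for the smaller indices $1,\ldots,k(j)-1$ shows, via \eqref{fourier coefficient estimate2} and \eqref{fourier coefficient estimate3}, that $\widetilde{\Phi}_{1,\e},\ldots,\widetilde{\Phi}_{k(j)-1,\e}$ concentrate on and asymptotically span the span of the $\Phi_{p}\phi_{1}$ with $\lambda_{p}<\lamk$, so that the orthogonality $\efunc\perp_{\e}\widetilde{\Phi}_{i,\e}$ forces the low-mode mass $\sum_{\lambda_{p}<\lamk}\alpha^{p,1}_{k}(\e)^{2}\to 0$ and hence \eqref{lemma1}.) The remaining points — uniform convergence of the metric quantities and compactness of $H^{1}(\Gamma\times(-1,1))\hookrightarrow L^{2}$ — are routine.
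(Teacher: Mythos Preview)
Your proposal is correct and follows essentially the same route as the paper: extract weak $H^{1}$/strong $L^{2}$ limits of the rescaled eigenfunctions, show the limits are $\tau$--independent Laplace--Beltrami eigenfunctions, use their mutual orthonormality together with min--max to force the limiting eigenvalues to equal $\lambda_{i}$, and conclude that the limit $\psi_{k}$ lies exactly in the $\lamk$--eigenspace. The only cosmetic difference is the closing computation: the paper squeezes $\sum_{p=k(j)}^{k(j+1)-1}(\alpha^{p,1}_{k})^{2}$ between $1+o(1)$ and $1+O(\e)$ via Cauchy--Schwarz applied to $\|\efunc-\widehat{\Phi}_{k}\|_{L^{2}}^{2}$, whereas you pass to the limit in each $\alpha^{p,1}_{k}(\e)$ and invoke Parseval in the finite-dimensional eigenspace --- both are straightforward and equivalent.
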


\begin{proof}
We will study the limit of $\efunc$ for $\e \to 0$. Using the upper estimate of $\lamkep$, we have the boundedness of $\{ \efunc \}_{\e > 0} \subset H^{1}(\Gamma \times (-1,1))$. Applying Rellich's Theorem, we can take a subsequence $\{ \e_{p} \}^{\infty}_{p=1}$, a nonnegative value $\widehat{\lambda}_{k}$, and a function $\widehat{\Phi}_{k} \in H^{1}(\Gamma \times (-1,1))$ such that
\begin{align}\label{convergence}
\begin{cases}
\displaystyle \lim_{p \to \infty} \lambda_{k, \e_{p}} = \widehat{\lambda}_{k}, \\
\displaystyle \lim_{p \to \infty} \norm{\widetilde{\Phi}_{k,\e_{p}} - \widehat{\Phi}_{k}}_{L^{2}(\Gamma \times (-1,1))}= 0, \\
\widetilde{\Phi}_{k,\e_{p}} \rightharpoonup \widehat{\Phi}_{k} \,\, \text{weakly}\,\, \text{in} \,\, H^{1}(\Gamma \times (-1,1)).
\end{cases}
\end{align}
By using weak lower semicontinuity of $H^{1}$-norm and the estimate \eqref{tau estimate}, we show that $\widehat{\Phi}_{k}(\xi,\tau)$ is independent of the variable $\tau$. Therefore we can denote $\widehat{\Phi}_{k}(\xi,\tau) = \widehat{\Phi}_{k}(\xi)$.
If we take $\psi(\xi,\tau) = \widehat{\psi}( \xi) \,\,(\widehat{\psi} \in H^{1}(\Gamma))$ as a test function in \eqref{eigenequation} and $p \to \infty$, then by \eqref{convergence} we obtain
\begin{equation}\label{convergence eq}
\intg \nabla_{g_{0}} \widehat{\Phi}_{k} \cdot \nabla_{g_{0}} \widehat{\psi} \G
= \widetilde{\lambda}_{k} \intg \widehat{\Phi}_{k}\widehat{\psi} \G \qquad \text{for any } \widehat{\psi} \in H^{1}(\Gamma),
\end{equation}
where we put
\begin{equation*}
\widetilde{\lambda}_{k} = \frac{2\widehat{\lambda}_{k}}{\sig_{-}+\sig_{+}}.
\end{equation*}
For any test function $\widehat{\psi} \in H^{1}(\Gamma)$, \eqref{convergence eq} holds. Thus $\widetilde{\lambda}_{k}$ is an eingenvalue of the Laplace--Beltrami operator on $\Gamma$ and $\widehat{\Phi}_{k}$ is a corresponding eigenfunction. By the upper bound of $\lamkep$, we get 
\begin{equation*}
\widehat{\lambda}_{k} \leqslant \frac{\sig_{-}+\sig_{+}}{2}\lamk.
\end{equation*}
Therefore we obtain 
\begin{equation}\label{upper}
\widetilde{\lambda}_{k} \leqslant \lamk.
\end{equation}

Also by using the orthonormality of $\efunc$, for each $k, k' \geqslant 1$ we have
\begin{equation*}
\inta \widehat{\Phi}_{k} \widehat{\Phi}_{k'} \Gt = \delta(k,k').
\end{equation*}
Thus we get
\begin{equation}\label{orthonormality}
\intg \widehat{\Phi}_{k} \widehat{\Phi}_{k'} \G = \frac{\delta(k,k')}{2}.
\end{equation}
Using the orthonormality condition \eqref{orthonormality}, we have
\begin{equation}\label{lower}
\widetilde{\lambda}_{k} \geqslant \lamk.
\end{equation}
Therefore from \eqref{upper} and \eqref{lower}, we obtain
\begin{equation*}
\widetilde{\lambda}_{k} = \lamk.
\end{equation*}
This implies that $\widetilde{\lambda}_{k}$ is the $k$-th eigenvalue and $\widehat{\Phi}_{k}$ is the corresponding $k$-th eigenfunction. Thus the eigenspace of $\lamk$ contains $\widehat{\Phi}_{k}$. We can express $\widehat{\Phi}_{k}$ as
\begin{equation}\label{wilde phi}
\widehat{\Phi}_{k}(\xi) = \sum^{k(j+1)-1}_{p=k(j)} c_{p} \Phi_{p}(\xi),
\end{equation}
where $\{c_{p}\}^{k(j+1)-1}_{p=k(j)}$ are suitable constants. By \eqref{orthonormality}, we get
\begin{equation*}
\sum^{k(j+1)-1}_{p=k(j)} c^{2}_{p} = \frac{1}{2}.
\end{equation*}
Thus we have 
\begin{align*}
\norm{\widetilde{\Phi}_{k,\e} - \widehat{\Phi}_{k}}^{2}_{L^{2}(\Gamma \times (-1,1))} &= \norm{\sum^{\infty}_{p,l=1} \alpha^{p,l}_{k}\Phi_{p}\phi_{l} - \sum^{k(j+1)-1}_{p=k(j)} c_{p} \Phi_{p}}_{L^{2}(\Gamma \times (-1,1))}^{2} \\
&= \sum^{\infty}_{p,l=1} (\alpha^{p,l}_{k})^{2} - 2\sum^{k(j+1)-1}_{p=k(j)} \alpha^{p,1}_{k} \sqrt{2}c_{p} + 2 \sum^{k(j+1)-1}_{p=k(j)} c^{2}_{p} \\
&= 2\left( 1 - \sum^{k(j+1)-1}_{p=k(j)} \alpha^{p,1}_{k} \sqrt{2}c_{p} \right) + O(\e).
\end{align*}
From \eqref{convergence}, since $||\widetilde{\Phi}_{k,\e} - \widehat{\Phi}_{k}||^{2}_{L^{2}(\Gamma \times (-1,1))} \to 0$ as $\e \to 0$, we obtain
\begin{equation}\label{estimateA}
\sum^{k(j+1)-1}_{p=k(j)} \alpha^{p,1}_{k} \sqrt{2}c_{p} = 1 + o(1).
\end{equation}
By using Cauchy--Schwarz's inequality for \eqref{estimateA} and the estimate \eqref{fourier coefficient estimate3}, we have
\begin{align*}
1+o(1) = \sum^{k(j+1)-1}_{p=k(j)} \alpha^{p,1}_{k} \sqrt{2}c_{p} &\leqslant \left( \sum^{k(j+1)-1}_{p=k(j)} (\alpha^{p,1}_{k})^{2} \right)^{1/2} \left( \sum^{k(j+1)-1}_{p=k(j)} (\sqrt{2}c_{p})^{2} \right)^{1/2} \\
&= \left( \sum^{k(j+1)-1}_{p=k(j)} (\alpha^{p,1}_{k})^{2} \right)^{1/2} \\
&\leqslant \left( \sum^{\infty}_{p=1} (\alpha^{p,1}_{k})^{2} \right)^{1/2} = 1 + O(\e).
\end{align*}
Therefore we obtain
\begin{equation*}
\sum^{k(j+1)-1}_{p=k(j)} (\alpha^{p,1}_{k})^{2} = 1 + o(1).
\end{equation*}
\end{proof}

From the estimate \eqref{estimate1} and Lemma \ref{key lemma1}, we have
\begin{align*}
\lamkep &\geqslant \frac{\sig_{-}+\sig_{+}}{2} \lamk \left( 1 - \frac{2C}{\sig_{-}+\sig_{+}}\e \right) - \left( C\lamk \e + C\e \right) / \sum^{k(j+1) -1}_{p=k(j)} (\alpha^{p,1}_{k})^{2} \\
&= \frac{\sig_{-}+\sig_{+}}{2} \lamk \left( 1 - \frac{2C}{\sig_{-}+\sig_{+}}\e \right) - \frac{C\lamk \e + C\e}{1+o(1)} \\
&\geqslant \frac{\sig_{-}+\sig_{+}}{2} \lamk - C\e.
\end{align*}
Thus we obtain the following lower estimate for the eigenvalue $\lamkep$:
\begin{equation}\label{lower estimate}
\lamkep \geqslant \frac{\sig_{-}+\sig_{+}}{2} \lamk - C\e.
\end{equation}
Combining the upper estimate \eqref{upper estimate} with the lower estimate \eqref{lower estimate}, we obtain the complete proof of Theorem $\ref{thm1}$.

\section{More precise asymptotic behavior if $\lamk$ is simple}\label{simplethm}
If we suppose that the eigenvalue $\lamk$ is simple, we obtain more precise asymptotic behavior of $\lamkep$. In order to prove this, first of all we need to get a better estimate for the Fourier coefficients of $\efunc$. 
\begin{lem}\label{key lemma2}
The following estimate holds:
\begin{equation}\label{sharp estimate for Fourier coefficients}
\sum^{\infty}_{p=1, l=2} \alpha^{p, l}_{k}(\e)^{2} = o(\e^{2}) \,\,\, \text{as} \,\,\, \e \to 0.
\end{equation}
\end{lem}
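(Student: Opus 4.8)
The plan is to squeeze the transverse term of the eigenvalue identity \eqref{eigenvalue}. Rewriting \eqref{eigenvalue} as
\[
\frac{1}{\e^{2}}\inta\sig\Big(\frac{\pa\efunc}{\pa\tau}\Big)^{2}\Gep=\lamkep-\inta\sig|\nabla_{g}\efunc|^{2}\Gep,
\]
I will bound $\lamkep$ above by \eqref{upper estimate} and bound $\inta\sig|\nabla_{g}\efunc|^{2}\Gep$ below by $\frac{\sig_{-}+\sig_{+}}{2}\lamk+o(1)$; this forces the left-hand side to be $o(1)$, from which \eqref{sharp estimate for Fourier coefficients} follows after expanding $\efunc$ in the basis $\{\Phi_{p}\phi_{l}\}$.

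For the lower bound I would first freeze the metric: since \eqref{eigenvalue} and \eqref{upper estimate} give $\inta|\nabla_{g_{0}}\efunc|^{2}\Gt\le C$, the asymptotic formulas \eqref{important formula1}--\eqref{important formula2} yield
\[
\inta\sig|\nabla_{g}\efunc|^{2}\Gep=\inta\sig|\nabla_{g_{0}}\efunc|^{2}\Gt+O(\e).
\]
Expanding $\efunc(\xi,\tau)=\sum_{p\ge1}a_{p}(\tau)\Phi_{p}(\xi)$ at each fixed $\tau$, with $a_{p}(\tau)=\intg\efunc(\xi,\tau)\Phi_{p}(\xi)\,\G$, one has $\inta\sig|\nabla_{g_{0}}\efunc|^{2}\Gt=\int_{-1}^{1}\sig(\tau)\sum_{p\ge1}\lambda_{p}\,a_{p}(\tau)^{2}\,d\tau$, and discarding the non-negative terms with $p\ne k$ gives
\[
\inta\sig|\nabla_{g}\efunc|^{2}\Gep\ge\lamk\Big(\sig_{-}\int_{-1}^{0}a_{k}(\tau)^{2}\,d\tau+\sig_{+}\int_{0}^{1}a_{k}(\tau)^{2}\,d\tau\Big)+O(\e).
\]

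It remains to see that $a_{k}$ becomes constant as $\e\to0$. By \eqref{Fourier expansion}, $a_{k}(\tau)=\alpha^{k,1}_{k}(\e)/\sqrt{2}+r_{\e}(\tau)$ where $r_{\e}=\sum_{l\ge2}\alpha^{k,l}_{k}(\e)\phi_{l}$ has $\|r_{\e}\|_{L^{2}(-1,1)}=O(\e)$ by \eqref{fourier coefficient estimate2}, while Lemma \ref{key lemma1} together with the simplicity of $\lamk$ (so that $k(j)=k$ and $k(j+1)=k+1$) gives $\alpha^{k,1}_{k}(\e)^{2}=1+o(1)$. Hence $\int_{-1}^{0}a_{k}(\tau)^{2}\,d\tau=\frac12+o(1)=\int_{0}^{1}a_{k}(\tau)^{2}\,d\tau$, so the previous display becomes $\inta\sig|\nabla_{g}\efunc|^{2}\Gep\ge\frac{\sig_{-}+\sig_{+}}{2}\lamk+o(1)$. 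Feeding this and \eqref{upper estimate} into the identity gives $\frac{1}{\e^{2}}\inta\sig(\pa\efunc/\pa\tau)^{2}\Gep=o(1)$; bounding $\sig\ge\min(\sig_{-},\sig_{+})$ and $\sqrt{G_{\e}}\ge c\sqrt{G_{0}}$ for small $\e$ turns this into $\inta(\pa\efunc/\pa\tau)^{2}\Gt=o(\e^{2})$. Finally, substituting \eqref{Fourier expansion} and using $\int_{-1}^{1}\phi_{l}'\phi_{l'}'\,d\tau=\mu_{l}\delta_{ll'}$ with $\phi_{1}'\equiv0$ yields $\sum_{p\ge1,\,l\ge2}\mu_{l}\,\alpha^{p,l}_{k}(\e)^{2}=o(\e^{2})$, and since $\mu_{l}\ge\mu_{2}=\pi^{2}/4$ for $l\ge2$ this is \eqref{sharp estimate for Fourier coefficients}.

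The genuinely delicate point is obtaining the arithmetic mean $\frac{\sig_{-}+\sig_{+}}{2}$, rather than merely $\min(\sig_{-},\sig_{+})$, as the leading constant in the lower bound for the tangential energy; this requires the concentration of $\efunc$ onto the single product $\Phi_{k}(\xi)\cdot(\text{const in }\tau)$, i.e. precisely the combination of \eqref{fourier coefficient estimate2} and Lemma \ref{key lemma1}, and it is here that the simplicity of $\lamk$ enters essentially. Everything else is routine bookkeeping of the $O(\e)$ and $o(1)$ errors produced by the metric expansions.
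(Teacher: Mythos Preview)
Your argument is correct but follows a genuinely different route from the paper. The paper subtracts from the weak form \eqref{eigenequation} an analogous identity satisfied by the limit eigenfunction $\widehat{\Phi}_{k}$ (the $L^{2}$ limit of $\efunc$, which lies in the eigenspace of $\lamk$), then tests the difference with $\psi=\efunc-\widehat{\Phi}_{k}$; the resulting left-hand side is $\inta\sig\bigl(|\nabla_{g_{0}}(\efunc-\widehat{\Phi}_{k})|^{2}+\e^{-2}(\pa_{\tau}\efunc)^{2}\bigr)\Gt$, and the right-hand side is shown to be $o(1)$ using Theorem~\ref{thm1} and the $L^{2}$ convergence. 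Your approach instead splits the eigenvalue identity \eqref{eigenvalue} directly and lower-bounds the tangential energy by isolating the $p=k$ mode; this is more elementary and avoids constructing the subtracted equation. Two points of comparison are worth noting. First, the paper's method simultaneously yields \eqref{desired estimate1}, i.e.\ $\inta|\nabla_{g_{0}}(\efunc-\widehat{\Phi}_{k})|^{2}\Gt=o(1)$, which is precisely the input to Lemma~\ref{key lemma3}; your argument does not give this directly, though it can be recovered by observing that your chain of inequalities is tight up to $o(1)$, so the discarded terms $\sum_{p\neq k}\lambda_{p}\int_{-1}^{1}a_{p}(\tau)^{2}\,d\tau$ must themselves be $o(1)$. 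Second, you invoke the simplicity of $\lamk$ to convert Lemma~\ref{key lemma1} into $(\alpha^{k,1}_{k})^{2}=1+o(1)$, whereas the paper's proof does not use simplicity at all (as noted in the remark following the lemma), and this generality is needed later for Theorem~\ref{thm3}. Your argument extends to the non-simple case with no essential change: keep all modes $p\in[k(j),k(j+1)-1]$ rather than just $p=k$, and use Lemma~\ref{key lemma1} in its original form.
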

\begin{proof}
We recall that $\widehat{\Phi}_{k}$ expressed by \eqref{wilde phi} satisfies $||\widetilde{\Phi}_{k,\e} - \widehat{\Phi}_{k}||^{2}_{L^{2}(\Gamma \times (-1,1))} \to 0$ as $\e \to 0$ and also \eqref{convergence eq}. In \eqref{convergence eq}, we take the same test function in \eqref{eigenequation}. Then we obtain
\begin{equation}\label{weak form 2}
\intg \nabla_{g_0} \widehat{\Phi}_{k} \cdot \nabla_{g_0} \psi \G
= \lambda_{k} \intg \widehat{\Phi}_{k}\psi \G.
\end{equation}
By \eqref{weak form 2}, we get
\begin{multline}\label{weak form 3}
\sig_{-} \intm \nabla_{g_0} \widehat{\Phi}_{k} \cdot \nabla_{g_0} \psi \G d\tau + \sig_{+} \intp \nabla_{g_0} \widehat{\Phi}_{k} \cdot \nabla_{g_0} \psi \Gt \\
= \frac{\sig_{-} + \sig_{+}}{2}\lamk \inta \widehat{\Phi}_{k} \psi \G d\tau + \frac{\sig_{-} - \sig_{+}}{2}\lamk \intg \left( \int^{0}_{-1} \psi d\tau - \int^{1}_{0} \psi d\tau \right) \widehat{\Phi}_{k} \G.
\end{multline}

Subtracting \eqref{weak form 3} from \eqref{eigenequation} and taking $\psi = \efunc - \widehat{\Phi}_{k}$, then we obtain
\begin{align*}
&\inta \sig \left( \left| \nabla_{g_0} (\efunc - \widehat{\Phi}_{k}) \right|^{2} + \frac{1}{\e^{2}} \left( \frac{\pa \efunc}{\pa \tau} \right)^{2} \right) \Gt \\
&\qquad = \left( \lamkep - \frac{\sig_{-} + \sig_{+}}{2} \lamk \right) \inta \efunc \left( \efunc - \widehat{\Phi}_{k} \right) \Gt \\
&\qquad \qquad + \frac{\sig_{-} + \sig_{+}}{2} \lamk \inta \left| \efunc - \widehat{\Phi}_{k} \right|^{2} \Gt \\
&\qquad \qquad \qquad + \frac{\sig_{+} - \sig_{-}}{2} \lamk \intg \left( \int^{0}_{-1} \efunc d\tau - \int^{1}_{0} \efunc d\tau \right)\widehat{\Phi}_{k} \G + O(\e) \\
&\qquad =: S_{1} + S_{2} + S_{3} + O(\e),
\end{align*}
where 
\begin{align}
S_{1} &= \left( \lamkep - \frac{\sig_{-} + \sig_{+}}{2} \lamk \right) \inta \efunc \left( \efunc - \widehat{\Phi}_{k} \right) \Gt, \label{first term} \\
S_{2} &= \frac{\sig_{-} + \sig_{+}}{2} \lamk \inta \left| \efunc - \widehat{\Phi}_{k} \right|^{2} \Gt, \label{second term} \\
S_{3} &= \frac{\sig_{+} - \sig_{-}}{2} \lamk \intg \left( \int^{0}_{-1} \efunc d\tau - \int^{1}_{0} \efunc d\tau \right)\widehat{\Phi}_{k} \G. \label{third term}
\end{align}
From Theorem \ref{thm1} and the fact that $||\widetilde{\Phi}_{k,\e} - \widehat{\Phi}_{k}||^{2}_{L^{2}(\Gamma \times (-1,1))} \to 0$ as $\e \to 0$, we have $S_{1} = o(\e)$ and $S_{2} = o(1)$. 
Furthermore, we can conclude that $S_{3} = O(\e)$. 
Indeed, let
\begin{equation*}
V(\xi) = \int^{0}_{-1} \efunc d\tau - \int^{1}_{0} \efunc d\tau.
\end{equation*}
Then, we get
\begin{align*}
\left| V(\xi) \right| &= \left| \int^{0}_{-1} \efunc d\tau - \int^{1}_{0} \efunc d\tau \right| \\
&\leqslant \int^{1}_{-1} \left| \int^{\tau}_{0} \frac{\pa \efunc}{\pa \tau}(\xi,s)ds \right| d\tau \\
&\leqslant 2 \left( \int^{1}_{-1} \left| \frac{\pa \efunc}{\pa \tau}(\xi,s) \right|^{2} ds \right)^{1/2}.
\end{align*}
By \eqref{tau estimate},
\begin{equation*}
\intg V(\xi)^{2} \G \leqslant 4 \inta \left| \frac{\pa \efunc}{\pa \tau}(\xi,s) \right|^{2} \G ds \leqslant C\e^{2}.
\end{equation*}
Thus we can estimate $S_{3}$ as follows:
\begin{align*}
|S_{3}| &= \left| \frac{\sig_{+} - \sig_{-}}{2} \lamk \intg V(\xi) \widehat{\Phi}_{k} \G \right| \\
&\leqslant C \left( \intg V(\xi)^{2} \G \right)^{1/2} \left( \intg \widehat{\Phi}_{k}^{2} \G \right)^{1/2} \\
&= \frac{C}{\sqrt{2}} \left( \intg V(\xi)^{2} \G \right)^{1/2} \leqslant C \e.
\end{align*}
Therefore we obtain $S_{3} = O(\e)$. 

By the above estimate, we have
\begin{multline}\label{key estimate}
\inta \sig \left( \left| \nabla_{g_0} (\efunc - \widehat{\Phi}_{k}) \right|^{2} + \frac{1}{\e^{2}} \left( \frac{\pa \efunc}{\pa \tau} \right)^{2} \right) \Gt \\
= S_{1} + S_{2} + S_{3} + O(\e) = o(1).
\end{multline}
From \eqref{key estimate}, we obtain
\begin{align}
\inta \left| \nabla_{g_0} (\efunc - \widehat{\Phi}_{k}) \right|^{2} \Gt &= o(1), \label{desired estimate1} \\
\inta \left( \frac{\pa \efunc}{\pa \tau} \right)^{2} \Gt &= o(\e^{2}). \label{desired estimate2}
\end{align}
The estimate \eqref{desired estimate2} implies the estimate for the Fourier coefficients we wanted. This completes the proof of Lemma \ref{key lemma2}.
\end{proof}
\begin{rmk}
\normalfont
Lemma \ref{key lemma2} also holds if $\lamk$ is not simple. This estimate will be used for the proof of Theorem \ref{thm3}.
\end{rmk}
Set $V = \{(p,l) \in \N^{2} \,|\, (p,l) \neq (k,1) \}$. Then we can also obtain an estimate for Fourier coefficients as follows by using the assumption that $\lamk$ is simple. 
\begin{lem}\label{key lemma3}
If $\lamk$ is simple, then we obtain the following estimate for the Fourier coefficients of $\efunc$:
\begin{equation}\label{key estimate2}
\sum_{(p,l) \in V} \alpha^{p, l}_{k}(\e)^{2}\lambda_{p} = o(1) \,\,\, \text{as} \,\,\, \e \to 0.
\end{equation}
\end{lem}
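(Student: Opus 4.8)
The plan is to recognise the left-hand side of \eqref{key estimate2} as the portion away from the index $(k,1)$ of the tangential Dirichlet energy $\inta|\nabla_{g_{0}}\efunc|^{2}\,\Gt$, to show that this energy converges to $\lamk$ as $\e\to0$, and then to subtract off the single surviving term using the simplicity of $\lamk$. For the first ingredient I would record a Parseval identity: since $\Gamma$ is closed, integration by parts gives $\intg\nabla_{g_{0}}\Phi_{p}\cdot\nabla_{g_{0}}\Phi_{q}\,\G=\lambda_{p}\,\delta(p,q)$, and $\{\phi_{l}\}$ is orthonormal on $(-1,1)$, so inserting \eqref{Fourier expansion} into the energy yields
\[
\inta|\nabla_{g_{0}}\efunc|^{2}\,\Gt=\sum_{p,l\geqslant1}\alpha^{p,l}_{k}(\e)^{2}\lambda_{p},
\]
the interchange of sum and integral being legitimate because $\{\Phi_{p}\phi_{l}\}$ is also an orthogonal basis of $H^{1}(\Gamma\times(-1,1))$ and $\efunc\in H^{1}$. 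In particular the series is finite, and from the boundedness of $\{\efunc\}_{\e>0}$ in $H^{1}(\Gamma\times(-1,1))$ (already obtained from \eqref{upper estimate} in the proof of Lemma \ref{key lemma1}) one gets the a priori bound $\inta|\nabla_{g_{0}}\efunc|^{2}\,\Gt=O(1)$.

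Next I would identify the limit of this energy. Because $\lamk$ is simple we have $k(j)=k$ and $k(j+1)=k+1$, so \eqref{wilde phi} reduces to $\widehat{\Phi}_{k}=c_{k}\Phi_{k}$ with $c_{k}^{2}=1/2$; in particular $\widehat{\Phi}_{k}$ does not depend on $\tau$ and $\inta|\nabla_{g_{0}}\widehat{\Phi}_{k}|^{2}\,\Gt=2c_{k}^{2}\lamk=\lamk$. The crucial input is estimate \eqref{desired estimate1} of Lemma \ref{key lemma2}, namely $\inta|\nabla_{g_{0}}(\efunc-\widehat{\Phi}_{k})|^{2}\,\Gt=o(1)$; combining it with the a priori bound via the reverse triangle inequality in $L^{2}(\Gamma\times(-1,1);\Gt)$ gives
\[
\inta|\nabla_{g_{0}}\efunc|^{2}\,\Gt=\inta|\nabla_{g_{0}}\widehat{\Phi}_{k}|^{2}\,\Gt+o(1)=\lamk+o(1).
\]

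Finally, since $\lamk$ is simple the complement of $V$ in $\N^{2}$ is exactly $\{(k,1)\}$, so by the Parseval identity and Lemma \ref{key lemma1} — which in the simple case reads $\alpha^{k,1}_{k}(\e)^{2}=1+o(1)$, the index range in \eqref{lemma1} collapsing to the single value $p=k$ — we obtain
\[
\sum_{(p,l)\in V}\alpha^{p,l}_{k}(\e)^{2}\lambda_{p}=\inta|\nabla_{g_{0}}\efunc|^{2}\,\Gt-\alpha^{k,1}_{k}(\e)^{2}\lamk=\big(\lamk+o(1)\big)-\big(1+o(1)\big)\lamk=o(1),
\]
which is \eqref{key estimate2}. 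The only genuinely delicate point is the middle step, the identification of $\inta|\nabla_{g_{0}}\efunc|^{2}\,\Gt$ with $\lamk+o(1)$, and this rests entirely on the already-established convergence \eqref{desired estimate1}; everything else is bookkeeping with the orthonormal basis $\{\Phi_{p}\phi_{l}\}$ and the estimates already in hand.
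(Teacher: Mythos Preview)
Your proof is correct and rests on the same two ingredients as the paper's: the Parseval identity $\inta|\nabla_{g_{0}}\cdot|^{2}\,\Gt=\sum_{p,l}(\alpha^{p,l})^{2}\lambda_{p}$ and the convergence \eqref{desired estimate1}. The only difference is organisational: the paper expands $\inta|\nabla_{g_{0}}(\efunc-\Phi_{k}\phi_{1})|^{2}\,\Gt$ directly in the basis, obtaining $\sum_{(p,l)\in V}(\alpha^{p,l}_{k})^{2}\lambda_{p}+(\alpha^{k,1}_{k}-1)^{2}\lamk$, and then reads off \eqref{key estimate2} from the nonnegativity of each summand --- so it never needs to invoke Lemma~\ref{key lemma1} or the reverse triangle inequality, whereas your route computes $\|\nabla_{g_{0}}\efunc\|^{2}$ and $\|\nabla_{g_{0}}\widehat{\Phi}_{k}\|^{2}$ separately and subtracts. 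Both are equally valid; the paper's version is marginally more self-contained.
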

\begin{proof}
Since $\lamk$ is simple, the eigenspace of $\lamk$ is one dimensional. Thus by \eqref{wilde phi}, we have 
\begin{equation}\label{convergence if simple}
\widehat{\Phi}_{k} = \pm \frac{1}{\sqrt{2}}\Phi_{k} = \pm \Phi_{k}\phi_{1}.
\end{equation}
Without loss of generality, we may assume $\widehat{\Phi}_{k} =  \Phi_{k}\phi_{1}$. Then we have
\begin{align*}
&\inta \left| \nabla_{g_0} (\efunc - \Phi_{k}\phi_{1}) \right|^{2} \Gt \\
&= \sum_{(p_{1},l_{1}) \in V}\sum_{(p_{2},l_{2}) \in V}\alpha^{p_{1},l_{1}}_{k}\alpha^{p_{2},l_{2}}_{k} \left( \intg \nabla_{g_0} \Phi_{p_{1}} \cdot \nabla_{g_0} \Phi_{p_{2}} \G \right) \times \left( \int^{1}_{-1} \phi_{l_{1}}\phi_{l_{2}} d\tau \right) \\
&+ 2\sum_{(p,l) \in V}\alpha^{p,l}_{k}(\alpha^{k,1}_{k}-1) \left( \intg \nabla_{g_0} \Phi_{p} \cdot \nabla_{g_0} \Phi_{k} \G \right) \times \left( \int^{1}_{-1} \phi_{l}\phi_{1} d\tau \right) \\
&+ (\alpha^{k,1}_{k}-1)^{2} \left( \intg \nabla_{g_0} \Phi_{k} \cdot \nabla_{g_0} \Phi_{k} \G \right) \times \left( \int^{1}_{-1} \phi_{1}^{2} d\tau \right) \\
&= \sum_{(p,l) \in V}(\alpha^{p,l}_{k})^{2}\lambda_{p} + (\alpha^{k,1}_{k}-1)^{2}\lamk. 
\end{align*}
By \eqref{desired estimate1}, the left hand side goes to $0$ as $\e \to 0$. Therefore we obtain \eqref{key estimate2}.
\end{proof}

We take $\psi = \alpha^{k,1}_{k} \Phi_{k} \phi_{1}$ as a test function in \eqref{eigenequation}. Then the right hand side of \eqref{eigenequation} is 
\begin{align*}
&\lamkep \inta \efunc \psi \Gep \\
&= \lamkep (\alpha^{k,1}_{k})^{2} - \e \frac{\lamkep}{\sqrt{2}} \sum^{\infty}_{p=1} \sum^{\infty}_{l=2} \alpha^{p,l}_{k} \alpha^{k,1}_{k} \left( \intg H \Phi_{p} \Phi_{k} \G \right) \times \left( \int^{1}_{-1} \tau \phi_{l} d\tau \right) + O(\e^{2}) \\
&= \lamkep (\alpha^{k,1}_{k})^{2} - I_{2}\e + O(\e^{2}),
\end{align*}
where 
\begin{equation*}
I_{2} = \frac{\lamkep}{\sqrt{2}} \sum^{\infty}_{p=1} \sum^{\infty}_{l=2} \alpha^{p,l}_{k} \alpha^{k,1}_{k} \left( \intg H \Phi_{p} \Phi_{k} \G \right) \times \left( \int^{1}_{-1} \tau \phi_{l} d\tau \right).
\end{equation*}

We will estimate $I_{2}$. Set
\begin{equation*}
f_{l} = \int^{1}_{-1} \tau \phi_{l} d\tau 
\end{equation*}
and 
\begin{equation*}
F_{l} = \sum^{\infty}_{p=1} \alpha^{p,l}_{k} \Phi_{p}.
\end{equation*}
Since the eigenfunction of $\phi_{l}$ is explicitly given by \eqref{eigenvalue and eigenfunction for thickness direction}, by a forward computation we get
\begin{equation}\label{aaaa}
\sum^{\infty}_{l = 1} \left| f_{l} \right|^{2} < \infty.
\end{equation}
Furthermore, Since $H = H(\xi)$ is continuous on $\Gamma$ and $\Gamma$ is compact, we have
\begin{align*}
|I_{2}| &\leqslant C \sum^{\infty}_{l=2} |\alpha^{k,1}_{k}| \left| f_{l} \right| \left| \intg H F_{l} \Phi_{k} \G \right| \\
&\leqslant C \sum^{\infty}_{l=2} |\alpha^{k,1}_{k}| \left| f_{l} \right| \norm{F_{l}}_{L^{2}(\Gamma)}.
\end{align*}
Here $\norm{F_{l}}_{L^{2}(\Gamma)} = \left( \sum^{\infty}_{p=1} (\alpha^{p,l}_{k})^{2} \right)^{1/2}$. Thus by using Lemma \ref{key lemma2}, we have 
\begin{align*}
|I_{2}| &\leqslant C \sum^{\infty}_{l=2} |\alpha^{k,1}_{k}| \left| f_{l} \right| \left( \sum^{\infty}_{p=1} (\alpha^{p,l}_{k})^{2} \right)^{1/2} \\
&\leqslant C \left( \sum^{\infty}_{l=2} (\alpha^{k,1}_{k})^{2} |f_{l}|^{2} \right)^{1/2} \left( \sum^{\infty}_{p=1, l=2} (\alpha^{p,l}_{k})^{2} \right)^{1/2} \\
&= o(\e).
\end{align*}
Therefore, the right hand side of \eqref{eigenequation} is 
\begin{equation*}
\lamkep (\alpha^{k,1}_{k})^{2} - \e \times o(\e) + O(\e^{2}) = \lamkep (\alpha^{k,1}_{k})^{2} + O(\e^{2}). 
\end{equation*}

Moreover, by using the asymptotic formulas for the inverse metric tensor \eqref{important formula1} and the Jacobian \eqref{important formula2}, the left hand side of \eqref{eigenequation} is
\begin{align*}
&\inta \sig \left( \nabla_{g} \efunc \cdot \nabla_{g} \psi + \frac{1}{\e^{2}} \frac{\pa \efunc}{\pa \tau} \frac{\pa \psi}{\pa \tau} \right) \Gep \notag \\
&= \frac{\sig_{-}+\sig_{+}}{2}\lamk (\alpha^{k,1}_{k})^{2} + \frac{\sig_{+} - \sig_{-}}{\sqrt{2}} \lamk \sum^{\infty}_{l=2} \alpha^{k,l}_{k} \alpha^{k,1}_{k} \int^{1}_{0} \phi_{l}d\tau \\
&+ \frac{\sig_{+} - \sig_{-}}{4}\e(\alpha^{k,1}_{k})^{2}\intg \sum^{n-1}_{i,j = 1} \left( \widetilde{G}^{ij} - H g^{ij}_{0} \right) \frac{\pa \Phi_{k}}{\pa \xi_{i}} \frac{\pa \Phi_{k}}{\pa \xi_{j}} \G \\
&+ \e \sum_{(p,l) \in V} \alpha^{p,l}_{k}\alpha^{k,1}_{k} \left( \intg \sum^{n-1}_{i,j = 1} \left( \widetilde{G}^{ij} - H g^{ij}_{0} \right)\frac{\pa \Phi_{p}}{\pa \xi_{i}} \frac{\pa \Phi_{k}}{\pa \xi_{j}} \G \right) \times \left( \frac{\sig_{-}}{\sqrt{2}} \int^{0}_{-1} \tau \phi_{l} d\tau + \frac{\sig_{+}}{\sqrt{2}} \int^{1}_{0} \tau \phi_{l} d\tau \right)\\
& + O(\e^{2}).
\end{align*}

In order to prove this theorem, it suffices to show that the second and fourth term in the above are $o(\e)$. Indeed, if it were true, then by using the estimate for the Fourier coefficients $(\alpha^{k,1}_{k})^{2} = 1 + o(1)$, we prove Theorem \ref{thm2}. We can easily show that the second term is $o(\e)$ from Lemma \ref{key lemma2}. Thus if we prove that the fourth term is $o(\e)$, then it will complete the proof of this theorem. From now on, we will estimate the fourth term. 

Set
\begin{equation*}
h_{l} = \frac{\sig_{-}}{\sqrt{2}} \int^{0}_{-1} \tau \phi_{l} d\tau + \frac{\sig_{+}}{\sqrt{2}} \int^{1}_{0} \tau \phi_{l} d\tau
\end{equation*}
and 
\begin{equation*}
I_{3} = \sum_{(p,l) \in V} \alpha^{p,l}_{k}\alpha^{k,1}_{k} \left( \intg \sum^{n-1}_{i,j = 1} \left( \widetilde{G}^{ij} - H g^{ij}_{0} \right)\frac{\pa \Phi_{p}}{\pa \xi_{i}} \frac{\pa \Phi_{k}}{\pa \xi_{j}} \G \right) h_{l}.
\end{equation*}
In the same way \eqref{aaaa}, we get
\begin{equation*}
\sum^{\infty}_{l = 1} \left| h_{l} \right|^{2} < \infty.
\end{equation*}
We separate $I_{3}$ into two parts as follows:
\begin{align}
I_{3} = \sum^{\infty}_{l = 2} \alpha^{k,l}_{k}\alpha^{k,1}_{k}\left( \intg \sum^{n-1}_{i,j = 1} \left( \widetilde{G}^{ij} - H g^{ij}_{0} \right)\frac{\pa \Phi_{k}}{\pa \xi_{i}} \frac{\pa \Phi_{k}}{\pa \xi_{j}} \G \right)h_{l} \notag \\
+ \sum^{\infty}_{l = 1}\sum_{p \neq k} \alpha^{p,l}_{k}\alpha^{k,1}_{k} \left( \intg \sum^{n-1}_{i,j = 1} \left( \widetilde{G}^{ij} - H g^{ij}_{0} \right)\frac{\pa \Phi_{p}}{\pa \xi_{i}} \frac{\pa \Phi_{k}}{\pa \xi_{j}} \G \right)h_{l} \notag \\
=: A_{1} + A_{2}.
\end{align}

At first, we estimate $A_{1}$. Since the term $\widetilde{G}^{ij} - H g^{ij}_{0}$ is continuous on $\Gamma$ and $\Gamma$ is compact, we can show that there exists a constant $C > 0$ such that
\begin{equation*}
\left|\sum^{n-1}_{i,j = 1} \left( \widetilde{G}^{ij} - H g^{ij}_{0} \right)\frac{\pa \Phi_{k}}{\pa \xi_{i}}\frac{\pa \Phi_{k}}{\pa \xi_{j}} \right| \leqslant C |\nabla_{g_{0}} \Phi_{k}|^{2}.
\end{equation*}
Thus we obtain 
\begin{equation*}
\left| \intg \sum^{n-1}_{i,j = 1} \left( \widetilde{G}^{ij} - H g^{ij}_{0} \right)\frac{\pa \Phi_{k}}{\pa \xi_{i}} \frac{\pa \Phi_{k}}{\pa \xi_{j}} \G \right| \leqslant C\lambda_{k}.
\end{equation*}
Therefore,
\begin{align*}
|A_{1}| &\leqslant C\lamk \sum^{\infty}_{l = 2} |\alpha^{k,l}_{k}| |\alpha^{k,1}_{k}| |h_{l}| \\
&\leqslant C\lamk \left( \sum^{\infty}_{l = 2} (\alpha^{k,l}_{k})^{2} \right)^{1/2} \left( \sum^{\infty}_{l = 2} (\alpha^{k,1}_{k})^{2} |h_{l}|^{2} \right)^{1/2} \\
&\leqslant C\lamk \left( \sum^{\infty}_{p = 1, l = 2} (\alpha^{p,l}_{k})^{2} \right)^{1/2} \left( (\alpha^{k,1}_{k})^{2} \sum^{\infty}_{l = 2} |h_{l}|^{2} \right)^{1/2} = o(\e),
\end{align*}
where we used Lemma \ref{key lemma2}. 

Next, we estimate $A_{2}$. Let
\begin{equation*}
F_{l} = \sum_{p \neq k} \alpha^{p,l}_{k} \Phi_{p}.
\end{equation*}
Then,
\begin{align*}
|A_{2}| &\leqslant \left| \sum^{\infty}_{l = 1} \alpha^{k,1}_{k} \left( \intg \sum^{n-1}_{i,j = 1} \left( \widetilde{G}^{ij} - H g^{ij}_{0} \right)\frac{\pa F_{l}}{\pa \xi_{i}} \frac{\pa \Phi_{k}}{\pa \xi_{j}} \G \right)h_{l} \right| \notag \\
&\leqslant C \sum^{\infty}_{l = 1} |\alpha^{k,1}_{k}|  |h_{l}| \norm{\nabla_{g_{0}} F_{l}}_{L^{2}(\Gamma)} \norm{\nabla_{g_{0}} \Phi_{k}}_{L^{2}(\Gamma)}. 
\end{align*}
Here $\norm{\nabla_{g_{0}} F_{l}}_{L^{2}(\Gamma)} = \left( \sum_{p \neq k} (\alpha^{p,l}_{k})^{2} \lambda_{p} \right)^{1/2}$ and $\norm{\nabla_{g_{0}} \Phi_{k}}_{L^{2}(\Gamma)} = \lamk^{1/2}$. We get 
\begin{align*}
|A_{2}| &\leqslant C \sum^{\infty}_{l = 1} |\alpha^{k,1}_{k}|  |h_{l}| \lamk^{1/2} \left( \sum_{p \neq k} (\alpha^{p,l}_{k})^{2} \lambda_{p} \right)^{1/2} \\
&\leqslant C\left( \sum^{\infty}_{l = 1} (\alpha^{k,1}_{k})^{2} \lamk |h_{l}|^{2} \right)^{1/2} \left( \sum^{\infty}_{l = 1} \sum_{p \neq k} (\alpha^{p,l}_{k})^{2} \lambda_{p} \right)^{1/2} \\
&\leqslant C\left( (\alpha^{k,1}_{k})^{2} \lamk \sum^{\infty}_{l = 1} |h_{l}|^{2} \right)^{1/2} \left( \sum_{(p,l) \in V} (\alpha^{p,l}_{k})^{2} \lambda_{p} \right)^{1/2} = o(1).
\end{align*}
Therefore we can obtain the estimate $|I_{3}| \leqslant |A_{1}| + |A_{2}| = o(\e) + o(1) = o(1)$. This implies that the fourth term is $o(\e)$, which proves Theorem \ref{thm2}. 

\section{Asymptotic behavior of eigenvalues if the interface is a sphere}\label{thmsphere}
If $\lamk$ is not simple, it is difficult to get more precise asymptotic behavior of the eigenvalues in general. If the interface $\Gamma$ is a sphere, however, we can obtain it although the eigenvalues of the sphere are not simple. The idea of the proof is to take $\psi = \sum^{k(j+1) -1}_{p=k(j)} \alpha^{p,1}_{k} \Phi_{p} \phi_{1}$ as the test function in \eqref{eigenequation} and use a property of the coefficients of the second fundamental form of the sphere. Now, we denote by $S^{n-1}(r)$ the $n-1$ dimensional sphere with radius $r > 0$. First of all, we will consider the coefficients of the second fundamental form of $S^{n-1}(r)$. 

\begin{lem}\label{secondq sphere}
If $\Gamma = S^{n-1}(r)$, then $b_{ij} = (-1/r) g_{0,ij}$.
\end{lem}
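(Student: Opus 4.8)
The plan is to compute the second fundamental form of the round sphere $S^{n-1}(r)$ directly from the definitions introduced in Section \ref{pre}. Recall that in the local coordinate $(\xi_1,\dots,\xi_{n-1})$ on $\Gamma$ we have $b_{ij} = \left(\pa^2/\pa\xi_i\pa\xi_j,\ \nu_\Gamma\right)$, and the induced metric is $g_{0,ij} = \left(\pa/\pa\xi_i,\ \pa/\pa\xi_j\right)$, where $(\cdot,\cdot)$ is the Euclidean inner product in $\Rn$. So the statement $b_{ij} = (-1/r)g_{0,ij}$ is a coordinate-free identity about the sphere that I only need to verify in one convenient parametrization (or invariantly).

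First I would fix the orientation: since $\Gamma = \pa D$ with $D$ the ball of radius $r$ (so that $\nu_\Gamma$ is the outward normal pointing away from $D$), a point $\xi \in S^{n-1}(r)$ satisfies $|\xi|^2 = r^2$ after centering, and the outward unit normal is $\nu_\Gamma(\xi) = \xi/r$. The cleanest route is the Weingarten/shape-operator computation: differentiating $\nu_\Gamma = \xi/r$ along a tangent direction $\pa/\pa\xi_j$ gives $\pa\nu_\Gamma/\pa\xi_j = (1/r)\,\pa\xi/\pa\xi_j = (1/r)\,\pa/\pa\xi_j$. By \eqref{second}, $b_{ij} = -\left(\pa/\pa\xi_i,\ \pa\nu_\Gamma/\pa\xi_j\right) = -(1/r)\left(\pa/\pa\xi_i,\ \pa/\pa\xi_j\right) = -(1/r)g_{0,ij}$, which is exactly the claim. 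Alternatively one can differentiate the relation $\left(\pa/\pa\xi_i,\ \nu_\Gamma\right)=0$ (tangency) in $\xi_j$ to get $\left(\pa^2/\pa\xi_i\pa\xi_j,\ \nu_\Gamma\right) = -\left(\pa/\pa\xi_i,\ \pa\nu_\Gamma/\pa\xi_j\right)$, and then use $\nu_\Gamma = \xi/r$ as above; either way the key input is simply that the Gauss map of the sphere is (a rescaling of) the identity.

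Since the identity $b_{ij} = (-1/r)g_{0,ij}$ is tensorial in $(i,j)$ — both sides transform the same way under a change of local patch — verifying it in any single parametrization, or by the invariant argument above, suffices; there is no need to enumerate the finitely many patches covering $S^{n-1}(r)$. As a sanity check one recovers the principal curvatures $-1/r$ (with respect to the chosen normal) and $H = -(n-1)/r$, consistent with the later computation in Section \ref{thmsphere}.

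There is no real obstacle here; the only point requiring care is the sign convention. The normal $\nu_\Gamma$ is the \emph{outward} normal to $D$ (hence to $\Gamma$ as $\pa D$), and with the paper's definition of $H$ as the sum of principal curvatures this forces the minus sign in $b_{ij} = (-1/r)g_{0,ij}$; choosing the inward normal would flip it. I would state the orientation explicitly at the start of the proof to make the sign unambiguous, and then the computation is two lines.
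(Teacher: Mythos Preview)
Your proof is correct and follows essentially the same route as the paper: write $\nu_\Gamma=\xi/r$, differentiate to get $\pa\nu_\Gamma/\pa\xi_j=(1/r)\,\pa/\pa\xi_j$, and apply \eqref{second} to conclude $b_{ij}=-(1/r)g_{0,ij}$. The paper's proof is just these two lines, without the additional remarks on tensoriality or the sanity check on $H$.
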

\begin{proof}
Since $\Gamma = S^{n-1}(r)$, the outward normal vector $\nu_{\Gamma}$ to $\Gamma$ is represented by $\xi / r$. By \eqref{second}, we have
\begin{equation}\label{second quantity}
b_{ij} = -\left(\frac{\pa}{\pa \xi_{i}}, \frac{\pa \nu_{\Gamma}}{\pa \xi_{j}}\right) = -\frac{1}{r} \left( \frac{\pa}{\pa \xi_{i}}, \frac{\pa }{\pa \xi_{j}} \right) = -\frac{1}{r} g_{0,ij}. 
\end{equation}
\end{proof}
Next, we take $\psi = \sum^{k(j+1) -1}_{p=k(j)} \alpha^{p,1}_{k} \Phi_{p} \phi_{1}$ as a the test function in \eqref{eigenequation}. Then the right hand side of \eqref{eigenequation} is 
\begin{align*}
\lamkep \inta \efunc \psi \Gep = \lamkep \sum^{k(j+1)}_{p=k(j)} (\alpha^{p,1}_{k})^{2} - I_{4}\e + O(\e^{2}),
\end{align*}
where 
\begin{equation*}
I_{4} = \frac{\lamkep}{\sqrt{2}} \sum^{\infty}_{p_{1}=1} \sum^{k(j+1)}_{p_{2}=k(j)}\sum^{\infty}_{l=2} \alpha^{p_{1},l}_{k} \alpha^{p_{2},1}_{k} f_{l} \left( \intg H \Phi_{p_{1}} \Phi_{p_{2}} \G \right).
\end{equation*}
Since $\Gamma = S^{n-1}(r)$, the mean curvature $H$ (defined as the sum of the principle curvatures) equals $- (n-1)/r$. Thus we have
\begin{align*}
I_{4} &= - \frac{n-1}{r} \frac{\lamkep}{\sqrt{2}} \sum^{\infty}_{p_{1}=1} \sum^{k(j+1)}_{p_{2}=k(j)}\sum^{\infty}_{l=2} \alpha^{p_{1},l}_{k} \alpha^{p_{2},1}_{k} f_{l} \left( \intg \Phi_{p_{1}} \Phi_{p_{2}} \G \right) \\
&= - \frac{n-1}{r} \frac{\lamkep}{\sqrt{2}} \sum^{k(j+1)}_{p=k(j)} \sum^{\infty}_{l=2} \alpha^{p,l}_{k} \alpha^{p,1}_{k} f_{l}.
\end{align*}
Therefore, 
\begin{align*}
|I_{4}| \leqslant C \left( \sum^{k(j+1)}_{p=k(j)} \sum^{\infty}_{l=2} (\alpha^{p,l}_{k})^{2} \right)^{1/2} \left( \sum^{k(j+1)}_{p=k(j)} \sum^{\infty}_{l=2} (\alpha^{p,1}_{k})^{2}  f_{l}^{2} \right)^{1/2} \\
\leqslant C \left( \sum^{\infty}_{p=1} \sum^{\infty}_{l=2} (\alpha^{p,l}_{k})^{2} \right)^{1/2} \left( \sum^{k(j+1)}_{p=k(j)} (\alpha^{p,1}_{k})^{2} \sum^{\infty}_{l=2} f_{l}^{2} \right)^{1/2} = o(\e).
\end{align*}

Furthermore, the left hand side of \eqref{eigenequation} is
\begin{align*}
&\inta \sig \left( \nabla_{g} \efunc \cdot \nabla_{g} \psi + \frac{1}{\e^{2}} \frac{\pa \efunc}{\pa \tau} \frac{\pa \psi}{\pa \tau} \right) \Gep \notag \\
&= \frac{\sig_{-}+\sig_{+}}{2}\lamk \sum^{k(j+1)}_{p=k(j)} (\alpha^{p,1}_{k})^{2} + \frac{\sig_{+} - \sig_{-}}{\sqrt{2}} \lamk \sum^{k(j+1)}_{p=k(j)} \sum^{\infty}_{l=2} \alpha^{p,l}_{k} \alpha^{p,1}_{k} \int^{1}_{0} \phi_{l}d\tau \\
&+ \e \sum^{\infty}_{p_{1}, l=1} \sum^{k(j+1)}_{p_{2}=k(j)} \alpha^{p_{1},l}_{k}\alpha^{p_{2},1}_{k} h_{l} \left( \intg \sum^{n-1}_{i,j = 1} \left( \widetilde{G}^{ij} - H g^{ij}_{0} \right)\frac{\pa \Phi_{p_{1}}}{\pa \xi_{i}} \frac{\pa \Phi_{p_{2}}}{\pa \xi_{j}} \G \right) + O(\e^{2}).
\end{align*}
The second term can be treated in the same way as $I_{4}$: we can show that it is $o(\e)$. Moreover, by the definition of the term $\widetilde{G}^{ij}$ and Lemma \ref{secondq sphere} we have
\begin{align*}
\widetilde{G}^{ij} - H g^{ij}_{0} &= 2 \sum^{n-1}_{k,l = 1}g^{ik}_{0} b_{kl} g^{lj}_{0} - H g^{ij}_{0} \\
&= -\frac{2}{r} \sum^{n-1}_{k,l = 1}g^{ik}_{0} g_{0,kl} g^{lj}_{0} + \frac{n-1}{r} g^{ij}_{0} \\
&= -\frac{2}{r} g^{ij}_{0} + \frac{n-1}{r} g^{ij}_{0} = \frac{n-3}{r} g^{ij}_{0}.
\end{align*}
Thus,
\begin{align*}
&\sum^{\infty}_{p_{1}, l=1} \sum^{k(j+1)}_{p_{2}=k(j)} \alpha^{p_{1},l}_{k}\alpha^{p_{2},1}_{k} h_{l} \left( \intg \sum^{n-1}_{i,j = 1} \left( \widetilde{G}^{ij} - H g^{ij}_{0} \right)\frac{\pa \Phi_{p_{1}}}{\pa \xi_{i}} \frac{\pa \Phi_{p_{2}}}{\pa \xi_{j}} \G \right) \\
&= \frac{n-3}{r}\sum^{\infty}_{p_{1}, l=1} \sum^{k(j+1)}_{p_{2}=k(j)} \alpha^{p_{1},l}_{k}\alpha^{p_{2},1}_{k} h_{l} \left( \intg \sum^{n-1}_{i,j = 1} g^{ij}_{0} \frac{\pa \Phi_{p_{1}}}{\pa \xi_{i}} \frac{\pa \Phi_{p_{2}}}{\pa \xi_{j}} \G \right) \\
&= \frac{n-3}{r} \lamk \sum^{k(j+1)}_{p=k(j)} \sum^{\infty}_{l=1} \alpha^{p,l}_{k}\alpha^{p,1}_{k} h_{l} = \frac{n-3}{4r}\left(\sig_{+}-\sig_{-}\right) \lamk \sum^{k(j+1)}_{p=k(j)}(\alpha^{p,1}_{k})^{2} + o(1).
\end{align*}
Hence the third term is 
\begin{equation*}
\frac{n-3}{4r}\left(\sig_{+}-\sig_{-}\right) \lamk \e \sum^{k(j+1)}_{p=k(j)}(\alpha^{p,1}_{k})^{2} + o(\e).
\end{equation*}
Therefore Theorem \ref{thm3} follows from Lemma \ref{key lemma1}.

\vspace{1.0\baselineskip}
{\bf Acknowledgments.} 
The author would like to thank Professor Shigeru Sakaguchi (Tohoku University), who is his supervisor, for suggesting this interesting problem and for many stimulating discussions. Also the author would like to thank Lorenzo Cavallina (Tohoku University) for warm encouragement and constructive comments. Moreover the author is grateful to the referees for their constructive suggestions.

\end{document}